\numberwithin{equation}{section}
\newtheorem{proposition}{Proposition}
\newtheorem{theorem}{Theorem}
\newtheorem{corollary}{Corollary}
\newtheorem{lemma}{Lemma}
\newcommand{\dd}{\textup{d}}
\def\E{\mathbb{E}}
\def\R{\mathbb{R}}
\def\<{\langle}
\def\>{\rangle}
\def\kmin{\textup{kmin}}
\def\binom{\textup{Binomial}}
\def\geo{\textup{Geometric}}
\def\Var{\textup{Var}}
\begin{document}

\title{Passage times of fast inhomogeneous immigration processes}

\author{
Hwai-Ray Tung\thanks{Department of Mathematics, University of Utah, Salt Lake City, UT 84112 USA (\texttt{ray.tung@utah.edu}).}\and
Sean D. Lawley\thanks{Department of Mathematics, University of Utah, Salt Lake City, UT 84112 USA (\texttt{lawley@math.utah.edu}).}
}
\date{\today}
\maketitle

\begin{abstract}
In many biophysical systems, key events are triggered when the fastest of many random searchers find a target. Most mathematical models of such systems assume that all searchers are initially present in the search domain, which permits the use of classical extreme value theory. In this paper, we explore $k$th passage times of inhomogeneous immigration processes where searchers are added to the domain over time either through time inhomogeneous rates or a Yule (pure birth) process. We rigorously prove convergence in distribution and convergence of moments of the $k$th passage times for both processes as immigration rates grow. In particular, we relate immigration with time inhomogeneous rates to previous work where all searchers are initially present through a coupling argument and demonstrate how immigration through a Yule process can be viewed as a time inhomogeneous immigration process with a random time shift. For Yule immigration, we find that the extreme distributions depart from the classical family of Frechet, Gumbel, and Weibull, and we compare our results to classical theorems on branching Brownian motion. This work offers one of the few examples where extreme value distributions can be obtained exactly for random variables which are neither independent nor identically distributed. 
\end{abstract}



\newpage
\section{Introduction}

The timing of events can often be cast as the first time a random ``searcher'' finds a ``target,'' which is called a first passage time (FPT) \cite{redner2001}. To name only a few, applications as varied as animal foraging \cite{kurella2015, mckenzie2009}, asset trading \cite{li2019first}, and menopause timing \cite{lawley2023slow, johnson2025modeling} have all been studied using FPT theory. Importantly, the search ``space'' may not necessarily be physical space, such as in models of carcinogenesis \cite{armitage1954age, nordling1953new, durrett2015branching} and computer search algorithms \cite{kao1996}. Traditionally, mathematical analysis of FPTs aims to determine the statistics of a single searcher. For instance, the mean FPT quantifies how long it takes a typical searcher to find a target, thereby neglecting outliers which find the target unusually quickly or slowly.

However, the important timescale in many systems is the time it takes the fastest searcher out of many searchers to find a target, which is called an extreme FPT \cite{weiss1983, lawley2020dist}. Extreme FPTs arise in a variety of biophysical systems in which the ``winner takes all,'' including gene regulation \cite{harbison2004}, human fertilization \cite{meerson2015}, and decision-making \cite{karamched2020heterogeneity, linn2024fast}. See \cite{grebenkov2024target} for additional examples.

Mathematical models of extreme FPTs commonly assume that there are $N\gg1$ noninteracting searchers initially present in the domain. This formalism is convenient because it makes the fastest FPT equivalent to the minimum of $N$ independent and identically distributed (iid) FPTs, which allows the use of classical extreme value theory \cite{colesbook}. While suitable for some applications, assuming that every searcher is initially present (i.e.\ that each searcher begins searching at the same time) is not appropriate for systems in which searchers are introduced progressively over time. As recently noted in Ref.~\cite{grebenkov2025fastest}, gene regulation, viral infection, and cell signaling all involve many searchers which begin searching at different times.

In this paper, we study extreme FPTs where the searchers progressively immigrate into the system. We study two types of immigration, which we call time-inhomogeneous immigration (TII) and Yule immigration (YI). In TII, searchers enter the system according to a Poisson point process with time-dependent rate $\lambda u(t)$. This means that the probability that a searcher enters the system between time $t$ and time $t+\Delta t$ is approximately $\lambda u(t)\Delta t$ for small $\Delta t$. TII generalizes the immigration model introduced by Campos and Mendez \cite{campos2024} and further explored in \cite{tung2025first}, which assumed an immigration rate that is constant in time (i.e.\ $u(t)=1$ for all time $t$).

In YI, searchers enter the system at rate $\lambda N(t)$, where $N(t)$ is the random number of searchers in the system at time $t$. That is, the immigration rate is proportional to the population size, and the population grows according to a so-called Yule process \cite{ross2009introduction}. YI is thus natural for systems in which the number of searchers grows by births, replications, or infections of a contagion.

For both TII and YI, we determine the distribution and all the moments of the $k$th fastest FPTs for fast immigration (i.e.\ in the large $\lambda$ limit). The proofs for the TII results revolve around establishing a coupling with previous models featuring $N$ initial searchers and no immigration, while the proofs for YI results are analytic in nature. 
Importantly, classical extreme value theory is not directly applicable to either TII or YI, since the search times are neither independent nor identically distributed. 
Indeed, our results constitute a rare instance in which extreme value statistics can be computed exactly for strongly correlated random variables \cite{majumdar2020extreme}. From the perspective of extreme value theory, it is also noteworthy that the limiting search time distribution for YI is not one of the three ``universal'' distributions \cite{colesbook} (i.e.\ the fastest FPT turns out to obey neither a Gumbel, Frechet, or Weibull distribution).

The rest of the paper is organized as follows. We formulate the model in section~\ref{sec:model}. We state our rigorous results for TII and YI in sections~\ref{sec:TIresults} and \ref{sec:YIresults}, respectively. In section \ref{sec:YIresults}, we also compare our results on YI to results on branching processes, including classical results on branching Brownian motion \cite{bramson1978maximal, bramson1983convergence, lalley1987conditional, berestycki2017branching, kim2023maximum}. We compare our theory to numerical solutions in section~\ref{sec:numerics} for some canonical models of stochastic search. We conclude by discussing relations to prior work in section~\ref{sec:discussion}. The appendices present the proofs of the results in sections \ref{sec:TIresults} and \ref{sec:YIresults}.

\section{Model}
\label{sec:model} 
In our model, searchers immigrate into the system over time and start at an initial position sampled from a given probability distribution (which could be a Dirac delta mass at a single point). These searchers randomly and independently explore the spatial domain under the same probability laws until they reach some given target region(s). We are interested in $T_k$, the time it takes the fastest $k\ge1$ searchers to find the target, when searchers immigrate quickly. 

More precisely, let $\sigma_n$ be the time that the $n$th particle immigrates in, and let $\tau_n$ be the time it takes the $n$th particle to find the target after immigrating in. The collection of search times is thus given by
\begin{align}\label{eq:collection}
    \{\tau_1 + \sigma_1,\tau_2+\sigma_2, \tau_3+\sigma_3, \dots\}
    =\{\tau_n+\sigma_n\}_{n\ge1},
\end{align}
where $\{\tau_n\}_{n\ge1}$ are iid realizations of $\tau$ and $\{\sigma_n\}_{n\ge1}$ is a sequence of immigration times whose distributions which depend on the immigration scheme. Notice that the search times $\tau_n+\sigma_n$ are neither independent nor identically distributed. The fastest search time is then
\begin{align}\label{eq:T1}
    T_1
    =\min\{\tau_n+\sigma_n\},
\end{align}
and the $k$-th fastest search time is
\begin{align*}
    T_k
    =\min\big\{\{\tau_n+\sigma_n\big\}\big\backslash\cup_{j=1}^{k-1}T_j\},\quad k\ge1.
\end{align*}

As such, knowing $\tau$ and $\{\sigma_n\}_{n\ge1}$ fully describes our system. To understand $\tau$, we examine its survival probability,
\begin{align*}
    S(t)
    =P(\tau>t),
\end{align*}
which encompasses information like the initial searcher position distribution, spatial domain, searcher motion, and target location. The assumptions on $S(t)$ and scenarios where they apply are discussed in Section \ref{sec:indivSearch}. For $\{\sigma_n\}_{n\ge1}$, we examine two immigration schemes - immigration rates that change proportional to a specified function of time $u(t)$, or immigration rates that change proportional to the population size. These are described in Sections \ref{sec:TIimmigrationrates} and \ref{sec:YIimmigrationrates}.

\subsection{Individual searcher survival $S(t)$}
\label{sec:indivSearch}
Let $f\sim g$ indicate that the functions $f$ and $g$ have the same small time behavior, i.e.\ $\lim_{t\rightarrow 0^+}f/g =1$. As in \cite{tung2025first}, we examine two cases for $S(t)$,
\begin{align}
    1-S(t) &\sim At^p, &&\quad A>0,\, p>0, \label{eq:power}\\
    1-S(t) &\sim At^pe^{-C/t}, &&\quad A>0,\, C>0,\,p\in\R. \label{eq:exponential}
\end{align}
We only need the small time behavior for $1-S(t)$ since we are interested in the case where immigration is fast, and therefore the target is always found quickly. This makes searcher behavior at larger times irrelevant. 

Three examples of processes with the power law decay in \eqref{eq:power} are searches on a discrete graph with exponential jump rates \cite{lawley2020networks}, L{\'e}vy flights \cite{lawley2023super}, and diffusive searches where searchers can start arbitrarily close to the target \cite{madrid2020comp}. For discrete graph searches, $p\ge1$ is the shortest number of jumps from the initial position of the searcher to the target, and $A$ is found by noting $Ap!$ is the sum of the product of the rates of a path over all paths of length $p$ that run from the initial position to the target \cite{lawley2020networks}. For L{\'e}vy flights, $p=1$ and an expression is known for $A$ \cite{lawley2023super}. L{\'e}vy flights have been used to model animal foraging \cite{palyulin2014, palyulin2016, tzou2023, gomez2024first, metzler2004} since their paths tend to linger in a small area before making a large movement to a new area. For the last example with diffusive searchers, this situation arises if the initial position of the search is uniformly distributed on the domain \cite{weiss1983, madrid2020comp, grebenkov2020single}. 

The exponential decay in \eqref{eq:exponential} covers cases of diffusive searchers where searchers cannot start arbitrarily close to the target \cite{lawley2020uni}. Studying the FPTs of these cases have been of longstanding interest \cite{redner2001, bressloffbook}. Factors like obstacles, drift, and dimension may affect the values of $A, p,$ and $C$, but do not change that $1-S(t) \sim At^pe^{-C/t}$, making this a very general category. In the case of $d$-dimensional Brownian motion, $C>0$ is the diffusion timescale,
\begin{equation}
    C=\frac{L^2}{4D}>0,
    \label{eqn:diffusionTimescale}
\end{equation}
where $L>0$ is the shortest distance from the searcher's initial position(s) to the target(s), and $D$ is the diffusivity of the searcher \cite{lawley2020uni}. The prefactor $A$ and power $p$ in \eqref{eq:exponential} depend on details of the geometry of the search space \cite{lawley2020dist}.

\subsection{Time inhomogeneous immigration rates}
\label{sec:TIimmigrationrates}
In the time inhomogeneous immigration process (TII process), particles immigrate into the system according to a time inhomogeneous Poisson point process (PPP) with rate $\lambda u(t)$ at time $t$. This means that for small $\Delta t$, the number of searchers immigrating in on the time interval $[t, t+\Delta t]$ is Poisson distributed with mean $\lambda u(t) \Delta t$. As such, the number of searchers that arrive in time interval $[t_1, t_2)$ is Poisson distributed with mean $\int_{t_1}^{t_2} \lambda u(t)dt$. Furthermore, the number of searchers arriving in disjoint time intervals are independent from each other. We note that in the constant rate case $u(t) = 1$, the times between immigrations are exponential random variables with rate $\lambda$.

Note that $P(\sigma_1 >t)$ is the probability that no searchers immigrate in the interval $[0, t)$, which is the probability that a Poisson random variable is $0$,
$$
P(\sigma_1 >t) = \exp\left[-\int_0^t \lambda u(s) ds\right].
$$
Similarly, $P(\sigma_k >t)$ is the probability that fewer than $k$ searchers immigrate in the interval $[0, t)$, which is
$$
P(\sigma_k >t) = \exp\left[-\int_0^t \lambda u(s) ds\right] \sum_{i=0}^{k-1}\frac{\left(\int_0^t \lambda u(s) ds\right)^i}{i!}.
$$
Independence on disjoint intervals for PPPs also tells us that
$$
P(\sigma_k >t + \sigma_{k-1}|\sigma_{k-1}) = \exp\left[-\int_{\sigma_{k-1}}^{t+\sigma_{k-1}} \lambda u(s) ds\right], \quad k\geq 2.
$$

\subsection{Yule immigration rates}
\label{sec:YIimmigrationrates}
In the Yule immigration (YI) process, there is an initial searcher (i.e.\ $\sigma_1 = 0$), and if there are $N(t)$ searchers at time $t$, then searchers immigrate in at rate $\lambda N(t)$. In short, the searcher population grows according to a pure birth process with rate $\lambda$, which is also referred to as a Yule process. This implies
\begin{equation}
    P(\sigma_k >t + \sigma_{k-1}) = \exp\left[-(k-1)\lambda t\right], \quad k\geq 2.
    \label{eqn:YuleInterarrival}
\end{equation}
It is well known that in a Yule process with growth rate $\lambda$ (see Example 4.10 in \cite{durrett2016essentials}), if the initial population size is $1$, then the increase in population size by $t$ has a Geometric distribution, specifically
\begin{equation}
    P(N(t) = k) = P(\geo(e^{-\lambda t}) = k-1) = e^{-\lambda t}(1-e^{-\lambda t})^{k-1}, \quad k\geq 1.
    \label{eqn:YulePopSize}
\end{equation}
As such,
$$
P(\sigma_k>t) = \sum_{i = 1}^{k-1} P(N(t) = i) = 1-(1-e^{-\lambda t})^k, \quad k\geq 1.
$$

\section{Theoretical results - time inhomogeneous immigration}
\label{sec:TIresults}
In section \ref{sec:STIequality}, we express the survival probability of the $k$th fastest searcher for the TII process in terms of the survival probability of a single searcher. Using this representation, we find in section \ref{sec:TIlimitDist} the limiting distribution of $T_k$ for large $\lambda$. Section \ref{sec:TIlimitMom} uses the limiting distribution to find the moments of $T_k$ for large $\lambda$.

\subsection{An exact survival probability formula}
\label{sec:STIequality}
Let $S_{TI, k}(t)=P(T_k>t)$ denote the probability that fewer than $k$ searchers have found the target by time $t$. To find an exact formula for $S_{TI, k}(t)$, we filter our PPP for immigration times to find a PPP for searchers who will arrive at the target by time $t$, who we will refer to as successful searchers. Since a searcher that arrives at time $s$ will have probability $1-S(t-s)$ of finding the target by time $t$, the PPP for successful searchers has rate $\lambda u(s)(1-S(t-s))$ at time $s$. This implies the number of successful searchers on $[0, t)$ is Poisson distributed with mean $\lambda I(t)$, where
$$
I(t) = \int_0^t u(s)(1-S(t-s)) ds = \int_0^t u(t-s)(1-S(s)) ds.
$$ 
Since $S_{TI, k}(t)$ is the probability of fewer than $k$ successful searcher on $[0, t)$,
\begin{proposition} 
  \begin{equation}
    S_{TI, k}(t) =\exp\left[-\lambda I(t)\right]\sum_{i = 0}^{k-1}\frac{\left(\lambda I(t)\right)^i}{i!}.
    \label{eqn:SIequalityInhomo}
  \end{equation}
  \label{prop:SIequalityInhomo}
\end{proposition}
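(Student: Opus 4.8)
The plan is to realize the full collection of search times as a marked (and then thinned) Poisson point process, reducing the claim to a Poisson tail computation. By the model setup the immigration times $\{\sigma_n\}_{n\ge1}$ form a time-inhomogeneous PPP on $[0,\infty)$ with intensity $\lambda u(s)$, and the individual search durations $\{\tau_n\}_{n\ge1}$ are iid copies of $\tau$ drawn independently of the immigration process. Fix $t>0$. First I would attach to the immigration point at $\sigma_n=s$ the binary mark $\mathbbm{1}\{\tau_n\le t-s\}$, which equals $1$ exactly when searcher $n$ has reached the target by time $t$; I will call such a searcher successful. Conditional on the arrival times, these marks are independent across $n$, and the retention probability for a searcher arriving at time $s$ is $P(\tau\le t-s)=1-S(t-s)$ for $s\le t$ (and $0$ for $s>t$, since such a searcher has not yet arrived).

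Next I would invoke the marking/thinning theorem for Poisson processes: independently thinning a PPP of intensity $\lambda u(s)$ by the location-dependent retention probability $1-S(t-s)$ produces a PPP with intensity equal to the product $\lambda u(s)(1-S(t-s))$. Restricting to the interval $[0,t)$, the total number of successful searchers is therefore Poisson distributed with mean
$$
\lambda\int_0^t u(s)\,(1-S(t-s))\,ds=\lambda I(t),
$$
where the second representation of $I(t)$ in the statement follows from the substitution $s\mapsto t-s$.

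Finally I would translate the event $\{T_k>t\}$ into a statement about this count. Since $T_k$ is the $k$th smallest of the search times $\tau_n+\sigma_n$, the event $\{T_k>t\}$ holds precisely when fewer than $k$ of these search times are at most $t$; because $\tau_n+\sigma_n\le t$ is equivalent to searcher $n$ being successful, this is exactly the event that the Poisson count of successful searchers is at most $k-1$. Evaluating the Poisson tail then yields
$$
S_{TI,k}(t)=P\big(\mathrm{Pois}(\lambda I(t))\le k-1\big)=e^{-\lambda I(t)}\sum_{i=0}^{k-1}\frac{(\lambda I(t))^i}{i!},
$$
which is \eqref{eqn:SIequalityInhomo}.

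The identity is elementary once the Poisson structure is in place, so the only step I would treat as genuinely requiring care — the crux of the argument — is the rigorous justification of the thinning: namely that the success indicators $\mathbbm{1}\{\tau_n\le t-\sigma_n\}$ are conditionally independent marks, which rests on the $\tau_n$ being iid and independent of the immigration PPP, together with the observation that the thinned intensity $\lambda u(s)(1-S(t-s))$ is integrable on $[0,t)$ so that $I(t)<\infty$ and the Poisson count is well defined. The change of variables relating the two forms of $I(t)$ and the final Poisson tail evaluation are then routine.
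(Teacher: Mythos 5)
Your argument is correct and is essentially the paper's own proof: both filter (thin) the immigration PPP by the success probability $1-S(t-s)$ to obtain a Poisson count of successful searchers with mean $\lambda I(t)$, then read off the survival probability as the Poisson tail $P(\mathrm{Pois}(\lambda I(t))\le k-1)$. Your additional remarks on the conditional independence of the marks and the integrability of the thinned intensity are appropriate but do not change the route.
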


\subsection{Convergence in distribution}
\label{sec:TIlimitDist}
In the large $\lambda$ limit, we show that $(T_k-b_\lambda)/a_\lambda$ converges in distribution, or equivalently,
\begin{theorem}
    In the TII process, let $I(t) = \int_0^t u(t-s)(1-S(s))ds$ with $I(t) \sim A_0t^{p_0}$ where $A_0, p_0>0$ or $I(t) \sim A_0t^{p_0}e^{-C_0/t}$ where $A_0>0, p_0\in\R,$ and $C_0>0$. Then
    $$
    \lim_{\lambda\rightarrow \infty} S_{TI, k}(a_{\lambda} x + b_{\lambda}) = \begin{cases}
  \frac{1}{(k-1)!}\Gamma\left(k, x^{p_0}\right) & \text{if }I(t) \sim A_0t^{p_0} \\
  \frac{1}{(k-1)!}\Gamma\left(k, e^x\right) & \text{if }I(t) \sim A_0t^{p_0}e^{-C_0/t}
\end{cases}
    $$
    where 
    $$
    (a_\lambda, b_\lambda) = \begin{cases}
  \left(\left(A_0\lambda\right)^{-1/p_0},\quad 0\right) & \text{if }I(t) \sim A_0t^{p_0} \\
  \left(\frac{C_0}{(\ln(C_0\lambda))^2},\quad \frac{C_0}{\ln (C_0\lambda)}
        +\frac{C_0p_0\ln(\ln(C_0\lambda))}{(\ln (C_0\lambda))^2}
        -\frac{C_0\ln(A_0C_0^{p_0})}{(\ln (C_0\lambda))^2}\right) & \text{if }I(t) \sim A_0t^{p_0}e^{-C_0/t}
\end{cases}
    $$
    and $\Gamma(r,z)$ denotes the upper incomplete gamma function, $\Gamma(r,z)=\int_z^\infty s^{r-1}e^{-s}\,\dd s$.
    \label{thm:TIlimitDist}
\end{theorem}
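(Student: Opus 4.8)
The plan is to reduce the entire statement to a single scalar limit, using Proposition~\ref{prop:SIequalityInhomo} together with the standard identity relating a Poisson tail to the incomplete gamma function. Writing $y=\lambda I(t)$, the formula \eqref{eqn:SIequalityInhomo} reads $S_{TI,k}(t)=e^{-y}\sum_{i=0}^{k-1}y^i/i!$, and repeated integration by parts gives $e^{-y}\sum_{i=0}^{k-1}y^i/i!=\Gamma(k,y)/(k-1)!$ for every integer $k\ge1$ and $y\ge0$. Hence
\[
S_{TI,k}(t)=\frac{1}{(k-1)!}\,\Gamma\big(k,\lambda I(t)\big).
\]
Since $z\mapsto\Gamma(k,z)$ is continuous on $[0,\infty)$, the theorem follows at once once I show that $\lambda I(a_\lambda x+b_\lambda)\to x^{p_0}$ in the power-law case and $\lambda I(a_\lambda x+b_\lambda)\to e^{x}$ in the exponential case. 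The problem thus collapses to a purely deterministic asymptotic analysis of $\lambda I(a_\lambda x+b_\lambda)$ as $\lambda\to\infty$; and because $a_\lambda,b_\lambda\to0$, the argument $t=a_\lambda x+b_\lambda$ tends to $0^+$ for each fixed $x$, so I may replace $I(t)$ by its small-$t$ asymptotics everywhere at the cost of a multiplicative factor $1+o(1)$.

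The power-law case is a one-line substitution. Using $I(t)=A_0t^{p_0}(1+o(1))$ together with $b_\lambda=0$ and $a_\lambda=(A_0\lambda)^{-1/p_0}$, I obtain for $x>0$
\[
\lambda I(a_\lambda x)=\lambda A_0(a_\lambda x)^{p_0}(1+o(1))=\lambda A_0(A_0\lambda)^{-1}x^{p_0}(1+o(1))\longrightarrow x^{p_0}.
\]
For $x\le0$ the argument is nonpositive and $S_{TI,k}=1$, which matches $\Gamma(k,0)/(k-1)!=1$, so that boundary is handled trivially.

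The exponential case carries all the work. Here I take logarithms of $\lambda I(t)\sim\lambda A_0t^{p_0}e^{-C_0/t}$, so that
\[
\ln\big(\lambda I(t)\big)=\ln\lambda+\ln A_0+p_0\ln t-\frac{C_0}{t}+o(1).
\]
Abbreviating $L=\ln(C_0\lambda)$, the stated shift and scale give $t=a_\lambda x+b_\lambda=\tfrac{C_0}{L}\big(1+BL^{-1}\big)$ with $B=x+p_0\ln L-\ln(A_0C_0^{p_0})$, so $t=(C_0/L)(1+O(\ln L/L))\to0^+$ as required. Expanding $-C_0/t=-L(1+BL^{-1})^{-1}=-L+B-B^2/L+\cdots$ and using $\ln t=\ln C_0-\ln L+o(1)$ and $\ln\lambda=L-\ln C_0$, I substitute into the display above. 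The definition of $b_\lambda$ is reverse-engineered so that the divergent and slowly varying pieces cancel: the $L$ from $\ln\lambda$ cancels the $-L$ from $-C_0/t$, the $p_0\ln L$ produced by $-C_0/t$ cancels the $-p_0\ln L$ coming from $p_0\ln t$, and the final constant-order term of $b_\lambda$ is chosen precisely to cancel the residual $O(1)$ contribution, leaving $\ln(\lambda I(a_\lambda x+b_\lambda))\to x$, i.e.\ $\lambda I(a_\lambda x+b_\lambda)\to e^{x}$.

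I expect the exponential case to be the sole genuine obstacle, and within it the delicate point is the bookkeeping of the $\ln\ln$-order corrections: one must check that every term not tending to a finite constant is cancelled by the prescribed $b_\lambda$, and that all the errors swept into $o(1)$—both the remainder $B^2/L=O((\ln L)^2/L)$ from the geometric expansion and the multiplicative $1+o(1)$ from the asymptotic relation $\sim$—indeed vanish. No uniformity in $x$ is needed, since convergence in distribution is a pointwise statement and both limiting functions $\Gamma(k,x^{p_0})/(k-1)!$ and $\Gamma(k,e^{x})/(k-1)!$ are continuous in $x$, so pointwise convergence of the survival function is exactly the asserted convergence in distribution.
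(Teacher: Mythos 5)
Your route is genuinely different from the paper's. The paper proves this theorem by a coupling: it superposes $\lambda$ iid copies of the rate-$u(t)$ process, identifies $T_1$ (and, up to an event of vanishing probability, $T_k$) with the order statistics of $\lambda$ iid searchers having survival probability $S_*(t)=e^{-I(t)}$, and then invokes the large-$N$ extreme value results of Theorem~\ref{thm:largeNkthFPT}. You instead work directly from Proposition~\ref{prop:SIequalityInhomo}, using the identity $e^{-y}\sum_{i=0}^{k-1}y^i/i!=\Gamma(k,y)/(k-1)!$ to reduce everything to the scalar limit $\lambda I(a_\lambda x+b_\lambda)\to x^{p_0}$ or $e^x$. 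That reduction is correct and arguably cleaner: it handles all $k\ge1$ at once with no coupling and no appeal to external theorems (though the paper reuses its coupling for the moment convergence, so the coupling is not wasted there). Your power-law case and your treatment of $x\le0$ are fine.

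The exponential case, however, does not close as you have written it. Carry out the constant-order bookkeeping you defer: with $L=\ln(C_0\lambda)$ you have $\ln\lambda=L-\ln C_0$, $p_0\ln t=p_0\ln C_0-p_0\ln L+o(1)$, and $-C_0/t=-L+x+p_0\ln L-\ln(A_0C_0^{p_0})+o(1)$. Summing, the $\pm L$ and $\pm p_0\ln L$ terms cancel and the constants give $-\ln C_0+\ln A_0+p_0\ln C_0-\ln A_0-p_0\ln C_0=-\ln C_0$, so $\ln\big(\lambda I(a_\lambda x+b_\lambda)\big)\to x-\ln C_0$, i.e.\ $\lambda I(a_\lambda x+b_\lambda)\to e^x/C_0$, not $e^x$. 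Your claim that ``the final constant-order term of $b_\lambda$ is chosen precisely to cancel the residual $O(1)$ contribution'' is exactly the step that fails: the residual is $-\ln C_0$, which vanishes only when $C_0=1$. To be fair, this defect is inherited from the statement rather than introduced by you: the normalization that actually works is the one with $\ln\lambda$ in place of $\ln(C_0\lambda)$ (the pair in Lemma~\ref{lem:betterablambdas}, which is what Theorem~\ref{thm:largeNFPT} with $N=\lambda$ produces under the coupling), and the paper's assertion that the two pairs are interchangeable via \eqref{eq:elem} is itself false for $C_0\ne1$, since $(b_\lambda'-b_\lambda)/a_\lambda\to\ln C_0\ne0$. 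So either redo your expansion with $\ell=\ln\lambda$, in which case your argument is complete, or flag that the theorem's stated $(a_\lambda,b_\lambda)$ yields the limit $\tfrac{1}{(k-1)!}\Gamma(k,e^x/C_0)$; what you cannot do is assert that the cancellation happens with $\ln(C_0\lambda)$, because it does not.
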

When $k=1$, we recover the Gumbel and Weibull distributions, which are the limiting distributions from the Fisher-Tippett-Gnedenko Theorem \cite{colesbook}. This might be surprising since as noted in \eqref{eq:collection} the search times $\{\tau_n + \sigma_n\}_n$ are not iid and therefore Fisher-Tippett-Gnedenko is not immediately applicable. The reason why results for TII are reminiscent of results from previous work on large $N$ many searchers can be explained through our proof methodology, which couples the TII process with the large $N$ many searchers process. Details are in the Appendix.

The remainder of this subsection discusses how Theorem \ref{thm:TIlimitDist} applies when $u(t) \sim \alpha_nt^n$ for nonnegative integer $n$ and positive $\alpha_n$. When $1-S(t) \sim At^p$, one can integrate
$$
I(t) \sim \int_0^t \alpha_n(t-s)^n As^p ds = \alpha_n\sum_{j=0}^n {n \choose j}t^j\int_0^t (-s)^{n-j} As^p ds = \frac{\Gamma(n+1)\Gamma(p+1)}{\Gamma(n+p+2)}A\alpha_nt^{n+p+1},
$$
and therefore we can apply Theorem \ref{thm:TIlimitDist} with 
$$
A_0 = \frac{\Gamma(n+1)\Gamma(p+1)}{\Gamma(n+p+2)}A\alpha_n, \qquad p_0 = n+p+1.
$$
When $1-S(t) \sim At^pe^{-C/t}$, we are unable to directly integrate, but we can still show that
\begin{equation}
    I(t) \sim \frac{A\alpha_nn!}{C^{n+1}}t^{p+2n+2}e^{-C/t},
    \label{eqn:shortTimeIt}
\end{equation}
and therefore we can apply Theorem \ref{thm:TIlimitDist} with 
$$
A_0 = \frac{A\alpha_nn!}{C^{n+1}}, \qquad p_0 = p+2n+2 \qquad C_0 = C.
$$
The details of this derivation are in Appendix \ref{app:TInumerics}.

\subsection{Convergence of moments}
\label{sec:TIlimitMom}
Since we showed that $(T_k-b_\lambda)/a_\lambda$ converges in distribution as $\lambda\rightarrow \infty$, one might conjecture that the moments of $(T_k-b_\lambda)/a_\lambda$ converge to the moments of the limiting distribution. We prove in the Appendix that this is the case.

\begin{theorem}
    In the TII process, let $I(t), a_\lambda,$ and $b_\lambda$ be defined as in Theorem \ref{thm:TIlimitDist}. For any integer $m \geq 1$, if $\int_0^\infty u(t) dt = \infty$, then
    $$
    \lim_{\lambda \rightarrow \infty}\E\left[\left(\frac{T_{k} - b_\lambda}{a_\lambda}\right)^m\right] = \begin{cases}
      \frac{1}{(k-1)!}\Gamma\left(k+\frac{m}{p_0}\right) & \text{if }I(t) \sim A_0t^{p_0} \\
      \frac{1}{(k-1)!}\frac{d^m}{dt^m}\Gamma\left(k+t\right)\bigg\rvert_{t=0} & \text{if }I(t) \sim A_0t^{p_0}e^{-C_0/t}.
    \end{cases}
    $$
    \label{thm:TIlimitMom}
\end{theorem}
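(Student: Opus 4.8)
The plan is to promote the convergence in distribution of Theorem~\ref{thm:TIlimitDist} to convergence of moments by establishing uniform integrability of the normalized variables $X_\lambda:=(T_k-b_\lambda)/a_\lambda$; this is the only place the hypothesis $\int_0^\infty u(t)\,\dd t=\infty$ is needed. As a preliminary I would confirm that the claimed right-hand sides really are the moments of the two limit laws. Differentiating the survival functions of Theorem~\ref{thm:TIlimitDist} gives limiting densities proportional to $x^{p_0k-1}e^{-x^{p_0}}$ and to $e^{kx}e^{-e^x}$, and the substitutions $u=x^{p_0}$ and $u=e^x$ turn the $m$th moment integrals into $\frac{1}{(k-1)!}\Gamma\!\left(k+\tfrac{m}{p_0}\right)$ and $\frac{1}{(k-1)!}\int_0^\infty(\ln u)^m u^{k-1}e^{-u}\,\dd u$, the latter being $\frac{1}{(k-1)!}\frac{\dd^m}{\dd t^m}\Gamma(k+t)\big|_{t=0}$ by differentiation under the integral sign.

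By Proposition~\ref{prop:SIequalityInhomo} and the identity $\sum_{i=0}^{k-1}\mu^i e^{-\mu}/i!=\Gamma(k,\mu)/(k-1)!$, the survival probability is $S_{TI,k}(t)=\frac{1}{(k-1)!}\Gamma(k,\lambda I(t))$, a quantity decreasing in $\lambda I(t)$, and Theorem~\ref{thm:TIlimitDist} says precisely that $\lambda I(a_\lambda x+b_\lambda)\to x^{p_0}$ in the power case (where $b_\lambda=0$) and $\to e^x$ in the exponential case. In the power case $X_\lambda\ge0$, so $\E[X_\lambda^m]=\int_0^\infty m x^{m-1}S_{TI,k}(a_\lambda x)\,\dd x$ and the integrand converges pointwise to $m x^{m-1}\Gamma(k,x^{p_0})/(k-1)!$. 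In the exponential case $X_\lambda$ may be negative, so I would use the signed-moment identity $\E[X_\lambda^m]=m\int_0^\infty x^{m-1}S_{TI,k}(a_\lambda x+b_\lambda)\,\dd x+(-1)^m m\int_0^\infty x^{m-1}\big(1-S_{TI,k}(b_\lambda-a_\lambda x)\big)\,\dd x$ and treat the two tails separately; the whole problem then reduces to interchanging limit and integral.

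The heart of the argument, and the step I expect to be the main obstacle, is a dominating bound uniform in $\lambda$. I would split the range of integration at a small fixed $\delta>0$ according to whether the argument of $I$ is at most $\delta$ or exceeds $\delta$. On the bulk region the small-time asymptotics $I(t)\sim A_0t^{p_0}$ (resp.\ $I(t)\sim A_0t^{p_0}e^{-C_0/t}$) give $\lambda I\ge\tfrac12 x^{p_0}$ (resp.\ $\ge\tfrac12 e^{x}$) once $\delta$ is small, whence $S_{TI,k}\le\Gamma(k,\tfrac12 x^{p_0})/(k-1)!$ and the analogue in the exponential case; these bounds are free of $\lambda$ and integrable against $x^{m-1}$. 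The negative tail in the exponential case is instead controlled by the factorial-moment bound $1-S_{TI,k}(t)\le(\lambda I(t))^k/k!$ together with the super-exponential smallness of $I(t)$ near $t=0$, giving a $\lambda$-free integrable dominating bound of order $e^{-kx}$. This is where $\int_0^\infty u=\infty$ enters: on the complementary region $t\ge\delta$ one has $I(t)=\int_0^t u(r)(1-S(t-r))\,\dd r\ge(1-S(t/2))\int_0^{t/2}u(r)\,\dd r\to\infty$, so $c_\delta:=\inf_{t\ge\delta}I(t)>0$ by continuity and positivity of $I$, and moreover $I(t)\gtrsim\int_0^{t/2}u$. Consequently $S_{TI,k}(t)$ is exponentially small in $\lambda$ on $t\ge\delta$, and after the change of variables $t=a_\lambda x$ the tail contribution is bounded by $a_\lambda^{-(m+1)}\int_\delta^\infty t^m S_{TI,k}(t)\,\dd t$; the delicate point is to check that the growth of $I$ forces this integral to decay faster than the polynomial blow-up of $a_\lambda^{-(m+1)}$, so that the tail vanishes. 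Without $\int_0^\infty u=\infty$ this fails, since then $T_k=\infty$ with positive probability and the moments are infinite.

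Assembling the pieces, the bulk and negative-tail bounds furnish fixed integrable dominating functions while the remaining tail contributions vanish, which amounts to uniform integrability of $\{|X_\lambda|^{m}\}_\lambda$ (indeed $\sup_\lambda\E|X_\lambda|^{m+1}<\infty$). Combined with the convergence in distribution of Theorem~\ref{thm:TIlimitDist}, the standard uniform-integrability criterion then gives $\E[X_\lambda^m]\to\E[X^m]$, equal to the constants computed in the first paragraph. Alternatively, the coupling with the $N$-searcher model underlying Theorem~\ref{thm:TIlimitDist} could be leveraged to import known moment estimates, but the direct estimate above seems the most transparent.
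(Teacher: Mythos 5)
Your overall strategy (promote Theorem \ref{thm:TIlimitDist} to moment convergence via uniform integrability) matches the paper's, and your preliminary computations are correct: the identity $S_{TI,k}(t)=\frac{1}{(k-1)!}\Gamma(k,\lambda I(t))$, the moment formulas for the two limit laws, the bulk bound $\lambda I(a_\lambda x+b_\lambda)\ge\tfrac12 x^{p_0}$ (resp.\ $\tfrac12 e^x$) for arguments below a fixed $\delta$, and the negative-tail bound $1-S_{TI,k}\le(\lambda I)^k/k!$ are all sound. But the mechanism you use to get uniform integrability is genuinely different from the paper's. The paper never estimates $S_{TI,k}$ directly on $t\ge\delta$: it couples the TII process with rate $\lambda u(t)$ (for integer $\lambda$) to $\lambda$ iid copies of the rate-$u(t)$ process, obtaining the sandwich $\mathcal{T}_1=T_1\le T_k\le\mathcal{T}_k$, where $\mathcal{T}_k$ is the $k$th order statistic of $\lambda$ iid searchers with survival probability $S_*(t)=e^{-I(t)}$. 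Uniform integrability of $\widetilde{T_k}^m$ is then inherited from Corollary \ref{coro:largeNUI} for the large-$N$ model, so the entire tail analysis is imported from the cited prior work rather than redone.

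The step you yourself flag as ``the delicate point'' --- showing that $a_\lambda^{-(m+1)}\int_\delta^\infty t^m S_{TI,k}(t)\,\dd t\to0$ --- is a genuine gap, and it cannot be closed from the bare hypothesis $\int_0^\infty u(t)\,\dd t=\infty$ by the route you describe. That hypothesis only gives $I(t)\to\infty$, with no rate: for $u(t)\asymp 1/\big((2+t)\ln(2+t)\big)$ one gets $I(t)\asymp\ln\ln t$, so $S_{TI,k}(t)\asymp(\lambda\ln\ln t)^{k-1}(\ln t)^{-\lambda}$ and $\int_\delta^\infty t^{m-1}S_{TI,k}(t)\,\dd t=+\infty$ for every $\lambda$; your dominated-convergence scheme has nothing to dominate with, and indeed $\sup_\lambda\E|X_\lambda|^{m+1}<\infty$ fails. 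Your bound $I(t)\gtrsim\int_0^{t/2}u$ is correct but does not ``force the integral to decay faster than the polynomial blow-up of $a_\lambda^{-(m+1)}$'' in general. To repair the argument you would either need a quantitative growth assumption on $\int_0^t u$ (e.g.\ $I(t)\gtrsim\ln t$, which makes $\E[\mathcal{T}_N]<\infty$ for large $N$ and is exactly the hypothesis under which the large-$N$ moment results hold), or you should fall back on the coupling you mention in your last sentence, which is what the paper actually does. Everything else in your proposal --- the bulk domination and the negative-tail estimate in the exponential case --- is correct and would survive that repair.
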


The additional condition $\int_0^\infty u(t) dt = \infty$ is used to ensure FPTs are finite. For example, if $u(t) =1$ on $[0, 1]$ and $u(t) = 0$ on $[1, \infty)$, then for any $\lambda$ there is a nonzero chance that no searchers ever immigrate in.

A natural consequence of Theorem \ref{thm:TIlimitDist} is that, plugging in $m=1$ or $2$, we find 
\begin{align*}
    \lim_{\lambda \rightarrow \infty}\E\left[\left(\frac{T_k-b_\lambda}{a_\lambda}\right)\right] &= \begin{cases}
      \frac{1}{(k-1)!}\Gamma\left(k+\frac{1}{p_0}\right) & \text{if }I(t) \sim A_0t^{p_0} \\
      -\gamma + \sum_{i=1}^{k-1}\frac{1}{i} & \text{if }I(t) \sim A_0t^{p_0}e^{-C_0/t}
    \end{cases} \\
    \lim_{\lambda \rightarrow \infty}\E\left[\left(\frac{T_k-b_\lambda}{a_\lambda}\right)^2\right] &= \begin{cases}
      \frac{1}{(k-1)!}\Gamma\left(k+\frac{2}{p_0}\right) & \text{if }I(t) \sim A_0t^{p_0} \\
      \left(-\gamma + \sum_{i=1}^{k-1}\frac{1}{i}\right)^2+\frac{\pi^2}{6}- \sum_{i=1}^{k-1}\frac{1}{i^2} & \text{if }I(t) \sim A_0t^{p_0}e^{-C_0/t}.
    \end{cases}
\end{align*}
Therefore, the mean and variance of the $k$th fastest search time have the following asymptotic expansions as $\lambda\to\infty$,
\begin{align}
    \E[T_k] &= \begin{cases}
      \frac{(A_0\lambda)^{-1/p_0}}{(k-1)!}\Gamma\left(k+\frac{1}{p_0}\right) + o(\lambda^{-1/p_0}) & \text{if }I(t) \sim A_0t^{p_0} \\
      \left(-\gamma + \sum_{i=1}^{k-1}\frac{1}{i}\right)a_\lambda + b_\lambda + o(1/\ln(\lambda)^2) & \text{if }I(t) \sim A_0t^{p_0}e^{-C_0/t}
    \end{cases} \label{eqn:TIkthmean}\\
    \Var[T_k^2] &= \begin{cases}
      \left(\frac{1}{(k-1)!}\Gamma\left(k+\frac{2}{p_0}\right) -\frac{1}{(k-1)!^2}\Gamma\left(k+\frac{1}{p_0}\right)^2\right)(A_0\lambda)^{-2/p_0} + o(\lambda^{-2/p_0}) & \text{if }I(t) \sim A_0t^{p_0} \\
      \left(\frac{\pi^2}{6}- \sum_{i=1}^{k-1}\frac{1}{i^2}\right)a_\lambda^2 + o(1/\ln(\lambda)^4) & \text{if }I(t) \sim A_0t^{p_0}e^{-C_0/t}
    \end{cases} \nonumber
\end{align}

\section{Theoretical results - Yule immigration}
\label{sec:YIresults}
In section \ref{sec:SYIequality}, we express the survival probability of the $k$th fastest searcher for the YI process in terms of the survival probability of a single searcher. Using this representation, we find in section \ref{sec:YIlimitDist} the limiting distribution of $T_k$ for large $\lambda$. Section \ref{sec:YIlimitMom} uses the limiting distribution to find the moments of $T_k$.

\subsection{An exact survival probability formula}
\label{sec:SYIequality}
Let $S_{YI, k}(t)=P(T_k>t)$ denote the probability that fewer than $k$ searchers have found the target by time $t$. If we condition on $N(t) = n$, then we can generate $\{\sigma_i\}_{2\leq i \leq n}$, the times at which searchers immigrated in after time $0$, by taking $n$ iid samples from a truncated exponential random variable, 
\begin{equation}
    P(\sigma_i'\leq a) = \frac{e^{\lambda a}-1}{e^{\lambda t}-1}, \quad \frac{d}{da} P(\sigma_i'\leq a) = \frac{\lambda e^{\lambda a}}{e^{\lambda t}-1} = \frac{\lambda e^{\lambda (a-t)}}{1-e^{-\lambda t}}, \quad \sigma_i'\in[0, t],
    \label{eqn:YIindividual}
\end{equation}
and reordering $\{\sigma_i'\}$ from smallest to largest. The proof of \eqref{eqn:YIindividual} is in the appendix. As such, when conditioned on $N(t)$ at time $t$, the probability a given searcher that immigrated in after $t=0$ is successful (where again ``successful'' means that the searcher finds the target before time $t$), is
$$
\int_0^t (1-S(t-s))\left(\frac{d}{ds} P(\sigma_i'\leq s)\right)ds = \int_0^t \frac{(1-S(s))\lambda e^{-\lambda s}}{1-e^{-\lambda t}} ds,
$$
and the total number of successful searchers that immigrated in on time interval $(0, t]$ is Binomial distributed,
\begin{equation}
\theta = \binom\left(N(t)-1, \int_0^t \frac{(1-S(s))\lambda e^{-\lambda s}}{1-e^{-\lambda t}} ds\right).
\label{eqn:theta}
\end{equation}
This can be simplified further. Recall from \eqref{eqn:YulePopSize} that $N(t)-1$ is geometric distributed with success probability $e^{-\lambda t}$. Then $\theta$ is a binomial with geometrically distributed many trials. It is well known (and we prove in the appendix for the sake of completeness) that a binomial random variable with a geometrically distributed number of trials is geometric,
\begin{equation}
    \binom\left(\geo(p_1), p_2\right) = \geo\left(\frac{p_1}{p_1 + p_2(1-p_1)}\right).
    \label{eqn:binomGeo}
\end{equation}
Using \eqref{eqn:binomGeo} with $p_1 = e^{-\lambda t}$ and $p_2 = \int_0^t (1-S(s))\lambda e^{-\lambda s} ds/(1-e^{-\lambda t})$ yields
\begin{proposition} 
   For YI, the number of successful searchers that immigrated in times $(0, t]$ is geometric with support on $\{0, 1, 2, \ldots\}$ and success probability
   $$
   p(t) = \frac{1}{1 + \lambda e^{\lambda t}\int_0^t (1-S(s))e^{-\lambda s} ds}.
   $$
   Furthermore,
   $$
   S_{YI, k}(t) = S(t)[1-(1-p(t))^k] + (1-S(t))[1-(1-p(t))^{k-1}].
   $$
   \label{prop:SYIexact}
\end{proposition}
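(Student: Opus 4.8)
The plan is to prove the proposition in two stages, matching its two assertions. The formula for $p(t)$ will follow by specializing the binomial-with-geometric-trials identity \eqref{eqn:binomGeo} to the count $\theta$ from \eqref{eqn:theta}, and the formula for $S_{YI,k}(t)$ will follow by conditioning on whether the initial searcher (the one present at $\sigma_1 = 0$) is itself successful and then reading off a geometric tail probability.

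For the first assertion, I would take $\theta$ as defined in \eqref{eqn:theta}, a binomial with $N(t)-1$ trials and success probability $p_2 = \int_0^t (1-S(s))\lambda e^{-\lambda s}\,ds/(1-e^{-\lambda t})$, and recall from \eqref{eqn:YulePopSize} that $N(t)-1$ is geometric with success probability $p_1 = e^{-\lambda t}$. Substituting $p_1$ and $p_2$ into \eqref{eqn:binomGeo} produces a geometric random variable with success probability $p_1/(p_1 + p_2(1-p_1))$. The factor $1-e^{-\lambda t}$ in the denominator of $p_2$ cancels against $(1-p_1)$, leaving $p_2(1-p_1) = \lambda\int_0^t (1-S(s))e^{-\lambda s}\,ds$, so the denominator becomes $e^{-\lambda t} + \lambda\int_0^t (1-S(s))e^{-\lambda s}\,ds$; multiplying numerator and denominator by $e^{\lambda t}$ then yields exactly $p(t) = (1 + \lambda e^{\lambda t}\int_0^t (1-S(s))e^{-\lambda s}\,ds)^{-1}$. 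This step is purely algebraic once \eqref{eqn:binomGeo} is available.

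For the second assertion, I would write $S_{YI,k}(t)$ as the probability that fewer than $k$ searchers are successful by time $t$. The successful searchers split into two groups: the initial searcher, which succeeds with probability $1-S(t)$, and the $\theta$ successful post-zero immigrants, whose law is the geometric variable just identified. Letting $B\in\{0,1\}$ denote the indicator of the event $\{\tau_1\le t\}$, the event $\{T_k>t\}$ is precisely $\{B+\theta\le k-1\}$. Conditioning on $B$ and using the elementary geometric tail identity $P(\theta\ge m)=(1-p(t))^m$ gives $P(\theta\le k-1)=1-(1-p(t))^k$ when $B=0$ and $P(\theta\le k-2)=1-(1-p(t))^{k-1}$ when $B=1$; weighting these by $S(t)$ and $1-S(t)$ reproduces the claimed formula. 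As a consistency check, the $k=1$ case (where the second bracket vanishes) reduces to $S(t)p(t)=P(B=0,\,\theta=0)$, as it should.

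I do not expect a genuine obstacle here; the one point requiring care is the independence invoked in the decomposition. I would argue that the initial searcher's search time $\tau_1$ is independent of both the Yule immigration process and the other searchers' search times, so that $B$ is independent of $\theta$, and I would lean on the truncated-exponential representation \eqref{eqn:YIindividual} of the post-zero immigration times (established separately in the appendix) to legitimize the binomial form of $\theta$ in \eqref{eqn:theta}. Granting these, the remainder is bookkeeping.
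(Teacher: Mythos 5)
Your proposal is correct and follows essentially the same route as the paper: the success probability $p(t)$ comes from substituting $p_1=e^{-\lambda t}$ and $p_2=\lambda\int_0^t(1-S(s))e^{-\lambda s}\,ds/(1-e^{-\lambda t})$ into the binomial-over-geometric identity \eqref{eqn:binomGeo} (with the same cancellation of $1-e^{-\lambda t}$), and the survival formula comes from conditioning on whether the initial searcher has succeeded and using the geometric tail of $\theta$. Your explicit attention to the independence of the initial searcher's indicator $B$ from $\theta$ is a point the paper leaves implicit, but it is the correct justification.
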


\subsection{Convergence in distribution for large $\lambda$}
\label{sec:YIlimitDist}
In the large $\lambda$ limit, we show that $(T_k-b_\lambda)/a_\lambda$ converges in distribution, or equivalently,
\begin{theorem}
   In the YI process,
   $$
   \lim_{\lambda \rightarrow\infty}S_{YI, k}(a_\lambda x + b_\lambda) = 1-\Big(1-\frac{1}{1+e^x}\Big)^k,
   $$
   where
   $$
   (a_\lambda, b_\lambda) = \begin{cases}
       \left(\frac{1}{\lambda}, \frac{1}{\lambda}\ln\left(\frac{\lambda^p}{A\Gamma(p+1)}\right)\right) & \text{if }1-S(t) \sim At^p \\
       \left(\frac{1}{\lambda}, \frac{2\sqrt{C\lambda}+\frac{2p-1}{4}\ln(C\lambda) - \ln(AC^p\sqrt{\pi})}{\lambda}\right) &\text{if }1-S(t) \sim At^pe^{-C/t}.
   \end{cases}
   $$
\label{thm:YIlimitDist}
\end{theorem}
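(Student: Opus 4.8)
The plan is to begin from the exact representation in Proposition~\ref{prop:SYIexact} and reduce the theorem to a single asymptotic statement about the success probability $p(t)$. Write $t_\lambda := a_\lambda x + b_\lambda$ and set
\[
J_\lambda(t) := \lambda e^{\lambda t}\int_0^t (1-S(s))\,e^{-\lambda s}\,\dd s, \qquad p(t) = \frac{1}{1+J_\lambda(t)} .
\]
In both regimes $b_\lambda\to 0$ (one has $b_\lambda\sim (p\ln\lambda)/\lambda$ in the power-law case and $b_\lambda\sim 2\sqrt{C/\lambda}$ in the exponential case), so $t_\lambda\to 0$, whence $S(t_\lambda)\to 1$ and $1-S(t_\lambda)\to 0$. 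Granting that $p(t_\lambda)\to q:=1/(1+e^x)$, the two bracketed factors in Proposition~\ref{prop:SYIexact} remain bounded and converge to $1-(1-q)^k$ and $1-(1-q)^{k-1}$; multiplying by $S(t_\lambda)\to 1$ and $1-S(t_\lambda)\to 0$ respectively gives $S_{YI,k}(t_\lambda)\to 1-(1-q)^k$, which is exactly the asserted limit. Thus it suffices to prove $J_\lambda(t_\lambda)\to e^x$ in each case. A useful preliminary remark is that because $t_\lambda\to 0$, the whole interval of integration eventually lies in the region where the small-time asymptotics for $1-S$ are valid, so up to a factor $1\pm\eps$ one may replace $1-S(s)$ by its surrogate $As^p$ or $As^pe^{-C/s}$ throughout.

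For the power-law case $1-S(s)\sim As^p$, I would rescale $s = u/\lambda$ to obtain
\[
\lambda^{p+1}\int_0^{t_\lambda}(1-S(s))\,e^{-\lambda s}\,\dd s = \int_0^{\lambda t_\lambda} \lambda^p\bigl(1-S(u/\lambda)\bigr)\,e^{-u}\,\dd u .
\]
Since $\lambda t_\lambda\to\infty$ and $\lambda^p(1-S(u/\lambda))\to Au^p$ pointwise, dominated convergence — with dominating function $(1+\eps)Au^pe^{-u}$, valid because $u\le\lambda t_\lambda$ forces $u/\lambda\le t_\lambda\le\delta$ — gives $\int_0^{t_\lambda}(1-S(s))e^{-\lambda s}\,\dd s\sim A\Gamma(p+1)\lambda^{-(p+1)}$. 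Hence $J_\lambda(t_\lambda)\sim A\Gamma(p+1)\lambda^{-p}e^{\lambda t_\lambda}$, and inserting $\lambda t_\lambda = x + \ln\!\bigl(\lambda^p/(A\Gamma(p+1))\bigr)$ collapses the expression to $e^x$.

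The exponential case is the principal obstacle and calls for a Laplace/saddle-point analysis of $\int_0^{t_\lambda} As^p e^{-C/s-\lambda s}\,\dd s$. The phase $\phi(s)=C/s+\lambda s$ has a unique interior minimizer $s^\ast=\sqrt{C/\lambda}$, with $\phi(s^\ast)=2\sqrt{C\lambda}$ and $\phi''(s^\ast)=2\lambda^{3/2}C^{-1/2}$. The key structural point is that $s^\ast$ sits at roughly the midpoint of $[0,t_\lambda]$, since $s^\ast\sim\tfrac12 t_\lambda$, and its distance to either endpoint, of order $\lambda^{-1/2}$, far exceeds the Gaussian width $(\phi''(s^\ast))^{-1/2}\sim\lambda^{-3/4}$; consequently the full Gaussian mass is captured and standard Laplace asymptotics yield
\[
\int_0^{t_\lambda} As^p e^{-C/s-\lambda s}\,\dd s \sim A\,(s^\ast)^p e^{-\phi(s^\ast)}\sqrt{\tfrac{2\pi}{\phi''(s^\ast)}} = A\sqrt\pi\,C^{p/2+1/4}\,\lambda^{-p/2-3/4}\,e^{-2\sqrt{C\lambda}} .
\]
Multiplying by $\lambda e^{\lambda t_\lambda}$ and substituting $\lambda t_\lambda = x + 2\sqrt{C\lambda} + \tfrac{2p-1}{4}\ln(C\lambda) - \ln(AC^p\sqrt\pi)$ cancels every power of $\lambda$ and of $C$ together with the competing factors $e^{\pm 2\sqrt{C\lambda}}$, leaving $J_\lambda(t_\lambda)\to e^x$; indeed this cancellation is precisely how the stated $b_\lambda$ is reverse-engineered.

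The delicate point is to make the Laplace estimate rigorous given that $1-S(s)\sim As^pe^{-C/s}$ holds only as $s\to 0^+$. I would fix $\eps>0$, choose $\delta$ so that $(1-\eps)As^p\le (1-S(s))e^{C/s}\le(1+\eps)As^p$ for $s\le\delta$, and use $t_\lambda\le\delta$ for large $\lambda$ to sandwich the true integral between $(1\pm\eps)\int_0^{t_\lambda}As^pe^{-C/s-\lambda s}\,\dd s$. Applying the saddle-point estimate to the model integrand and then letting $\eps\downarrow 0$ pins the leading constant exactly. An entirely analogous short-time Laplace computation already underlies the TII estimate \eqref{eqn:shortTimeIt}, so the same machinery transfers directly.
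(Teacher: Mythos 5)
Your proposal is correct and follows essentially the same route as the paper: reduce via Proposition~\ref{prop:SYIexact} to showing $\lambda e^{\lambda t_\lambda}\int_0^{t_\lambda}(1-S(s))e^{-\lambda s}\,\dd s\to e^x$, handle the power-law case by the substitution $s=u/\lambda$ and the Gamma integral, and handle the exponential case by Laplace's method at the interior saddle $s^*=\sqrt{C/\lambda}$ (the paper rescales by $\sqrt{C/\lambda}$ first and applies the textbook Laplace estimate at $u=1$, which is the same computation), with the same $\varepsilon$-sandwich to justify replacing $1-S$ by its small-time surrogate.
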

Setting $S_{YI, k}(a_\lambda x + b_\lambda) = 1/2$ indicates that the median $k$th passage time is
\begin{equation}
    \textup{median}(T_k)
    =b_\lambda-a_\lambda \ln(2^{1/k}-1) + o(1/\lambda)\quad\text{as }\lambda\to\infty.
    \label{eqn:YImedian}
\end{equation}

Theorem \ref{thm:YIlimitDist} differs from analogous results for large $N$, constant immigration, and TII in three key ways. The first observation, and least surprising, is that $a_\lambda$ and $b_\lambda$ now shrink in $O(1/\lambda)$ rather than $O(\ln(\lambda))$ or $O(\ln(N))$. The second observation is that in either case of short time behavior, we find the same limiting distribution. The third observation is that the limiting distribution of the fastest FPT ($k=1$) is not Weibull or Gumbel; rather, it is a logistic distribution, which can be expressed as the difference between two independent standard Gumbel distributed random variables.

We now give a heuristic argument to understand the second and third observations. It is well known (see Example 4.10 in \cite{durrett2016essentials}) that the population size $N(t)$ in a Yule process satisfies $e^{-\lambda t} N(t) \rightarrow V$ as $t\rightarrow \infty$ where $V$ is an exponential random variable with rate $1$. We might then approximate $N(t)$ as
$$
Ve^{\lambda t} = e^{\lambda (t- \ln(V)/\lambda)}.
$$ 
This suggests that one can also examine the fastest FPT of YI by instead examining a time inhomogeneous immigration process where the immigration rate is $u(t) = Ve^{\lambda t}$. Indeed, one can verify that
$$
S_{YI, 1}(t) = p(t) = \E\left[e^{-\lambda\int_0^t Ve^{\lambda (t-s)}(1-S(s))ds}\right].
$$
As such, we can view differing values of $V$ as time shifts of size $\ln(V)/\lambda$ compared to the process where $V=1$. Letting $\tau_\lambda$ denote the fastest FPT when $V=1$, then
$$
S_{YI, 1}(a_\lambda x + b_\lambda) \approx P\left(\tau_\lambda - \frac{1}{\lambda}\ln(V) > a_\lambda x + b_\lambda\right) = P\left(\lambda\tau_\lambda - \ln(V) > x+\lambda b_\lambda\right).
$$
It is straightforward to verify that
$$
P(\ln(V)>x) = P(V>e^x) = e^{-e^x},
$$
and therefore $X_1:=\ln(V)$ is a Gumbel distributed random variable. Furthermore, one can also verify that
$$
P(\tau_\lambda > a_\lambda x + b_\lambda) = \exp\left[-\lambda\int_0^{a_\lambda x + b_\lambda} e^{\lambda (a_\lambda x + b_\lambda -s)}(1-S(s))ds\right] \rightarrow e^{-e^x},
$$
as $\lambda \rightarrow \infty$ and therefore $\tau_\lambda \approx X_2/\lambda + b_\lambda$ where $X_2$ is Gumbel. Plugging back yields
$$
S_{YI, 1}(a_\lambda x + b_\lambda) \approx P(X_1-X_2 > x).
$$
Therefore, the limiting distribution of the fastest FPT of YI is the difference of two Gumbel distributions, with one Gumbel representing the hitting time in an average scenario where population grows deterministically, and the other representing a time shift from the inherent stochasticity of population growth.

\subsection{Convergence of moments for large $\lambda$}
\label{sec:YIlimitMom}
Similar to the TII case, one might expect that the moments of $(T_k-b_\lambda)/a_\lambda$ converge to the moments of the limiting distribution. One can compute the moment generating function of the limiting distribution by taking a computer algebra software like Mathematica to find
$$
\int_{-\infty}^\infty e^{tx} \left[\frac{d}{dx}\left(1-\left(1-\frac{1}{1+e^x}\right)^k\right)\right] dx = \frac{1}{(k-1)!}\Gamma\left(k+t\right)\Gamma\left(1-t\right),
$$
where $\Gamma(x)$ is the Gamma function. By showing uniform integrability of $((T_k-b_\lambda)/a_\lambda)^m$, it follows that
\begin{theorem}
    In the YI process, for any integer $m \geq 1$,
    $$
    \lim_{\lambda \rightarrow \infty}\E\left[\left(\frac{T_k-b_\lambda}{a_\lambda}\right)^m\right] = \frac{1}{(k-1)!}\frac{d^m}{dt^m}\Gamma\left(k+t\right)\Gamma\left(1-t\right)\bigg\rvert_{t=0}.
    $$
    \label{thm:YImoments}
\end{theorem}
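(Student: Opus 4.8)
The plan is to promote the convergence in distribution from Theorem~\ref{thm:YIlimitDist} to convergence of moments by verifying uniform integrability, as anticipated in the text preceding the statement. Set $Y_\lambda=(T_k-b_\lambda)/a_\lambda$, so $Y_\lambda$ converges in distribution to the variable $Y$ with survival function $1-(1-\tfrac{1}{1+e^x})^k$. Its moment generating function $M(t)=\tfrac{1}{(k-1)!}\Gamma(k+t)\Gamma(1-t)$ is finite and analytic on the strip $-k<\mathrm{Re}(t)<1$ (the poles of $\Gamma(k+t)$ and $\Gamma(1-t)$ sit at $t\le-k$ and $t\ge1$), so $Y$ possesses finite moments of all orders with $\E[Y^m]=M^{(m)}(0)$, which is exactly the claimed right-hand side. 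By the standard fact that $Y_\lambda\Rightarrow Y$ together with uniform integrability of $\{|Y_\lambda|^m\}$ yields $\E[Y_\lambda^m]\to\E[Y^m]$, and since boundedness of $\sup_\lambda\E|Y_\lambda|^q$ for some $q>m$ forces the requisite uniform integrability, the whole theorem reduces to the single estimate $\sup_{\lambda\ge\lambda_0}\E|Y_\lambda|^q<\infty$ for every $q\ge1$.

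I would obtain this bound from tail estimates. Writing $\E|Y_\lambda|^q=q\int_0^\infty x^{q-1}[P(Y_\lambda>x)+P(Y_\lambda<-x)]\,dx$, I express both tails through Proposition~\ref{prop:SYIexact}. The inequality $1-(1-p)^j\le jp$ gives the right tail $P(Y_\lambda>x)=S_{YI,k}(a_\lambda x+b_\lambda)\le k\,p(a_\lambda x+b_\lambda)$, while the algebraic identity $1-S_{YI,k}(t)=(1-p(t))^{k-1}[1-S(t)p(t)]$ handles the left tail $P(Y_\lambda<-x)=1-S_{YI,k}(b_\lambda-a_\lambda x)$ (which is zero once $x\ge\lambda b_\lambda$). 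Both reduce to controlling $p(t)=1/(1+J(t))$ with $J(t)=\lambda e^{\lambda t}\int_0^t(1-S(s))e^{-\lambda s}\,ds$. The useful structural observation is the factorization
\[
J(a_\lambda x+b_\lambda)=e^{x}\,\lambda e^{\lambda b_\lambda}\!\int_0^{a_\lambda x+b_\lambda}(1-S(s))e^{-\lambda s}\,ds,
\]
valid because $\lambda a_\lambda=1$; since the integral increases with its upper limit, the bracketed factor is $\ge J(b_\lambda)$ when the offset $a_\lambda x$ is nonnegative and $\le J(b_\lambda)$ when it is nonpositive. Thus everything is anchored to the single number $J(b_\lambda)$.

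The remaining analytic input is the anchor estimate $J(b_\lambda)\to1$ (equivalently $p(b_\lambda)\to\tfrac12$), which is precisely the Laplace/Watson computation that fixes $b_\lambda$ in Theorem~\ref{thm:YIlimitDist}: in the power case one uses $1-S(s)\sim As^p$ and the substitution $u=\lambda s$ to get $\int_0^{b_\lambda}(1-S(s))e^{-\lambda s}\,ds\sim A\Gamma(p+1)\lambda^{-(p+1)}$, and in the exponential case one expands $\int_0^{b_\lambda}As^p e^{-(C/s+\lambda s)}\,ds$ about the interior saddle $s^\ast=\sqrt{C/\lambda}$, which lies at $b_\lambda/2$ so the truncation at $b_\lambda$ is harmless. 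Granting $c_1\le J(b_\lambda)\le c_2$ for large $\lambda$, the factorization yields the uniform bounds $P(Y_\lambda>x)\le k/(c_1 e^{x})$ for $x\ge0$ and $J(b_\lambda-a_\lambda x)\le c_2 e^{-x}$, whence $P(Y_\lambda<-x)\le c_2^{k-1}e^{-(k-1)x}$ when $k\ge2$. Both are dominated by $x^{q-1}$-integrable envelopes uniformly in $\lambda$, and $\sup_\lambda\E|Y_\lambda|^q<\infty$ follows.

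The one place requiring genuine care, and the main obstacle, is the left tail when $k=1$: there the factor $(1-p)^{k-1}$ is absent and $1-S_{YI,1}(t)=1-S(t)p(t)\le(1-S(t))+c_2e^{-x}$ contains the residual term $1-S(t)$, which does not decay in $x$ and must be integrated over the $\lambda$-growing window $x\in[0,\lambda b_\lambda]$. Here I would bound $1-S(t)\le 1-S(b_\lambda)$ (using monotonicity of $1-S$ and $t\le b_\lambda$) and check that $(1-S(b_\lambda))(\lambda b_\lambda)^{q}\to0$ in both regimes: this holds because $1-S(b_\lambda)\sim Ab_\lambda^p$ decays polynomially against the merely logarithmic $\lambda b_\lambda$ in the power case, and $1-S(b_\lambda)$ carries the factor $e^{-C/b_\lambda}\sim e^{-\frac12\sqrt{C\lambda}}$ against the merely polynomial $\lambda b_\lambda$ in the exponential case. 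Assembling the two tails, the two short-time regimes for $S$, and the $k$-dependence is the bulk of the bookkeeping, but each ingredient is an elementary Laplace-type estimate once the factorization and the anchor $J(b_\lambda)\to1$ are in place.
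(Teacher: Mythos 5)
Your proposal is correct and follows essentially the same route as the paper: convergence in distribution plus uniform integrability, with both tails controlled through $p(t)=1/(1+J(t))$, the factorization $J(a_\lambda x+b_\lambda)=e^{x}\lambda e^{\lambda b_\lambda}\int_0^{a_\lambda x+b_\lambda}(1-S(s))e^{-\lambda s}\,ds$, and the anchor $J(b_\lambda)\to1$ (the paper's Lemma~\ref{lem:YIpointwise} at $x=0$). The only notable difference is that where the paper disposes of the residual $1-S$ term in the left tail by citing prior work, you give a short self-contained monotonicity argument via $(1-S(b_\lambda))(\lambda b_\lambda)^q\to0$, which checks out in both short-time regimes.
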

\noindent Notably, plugging in $m=1$ or $2$, we find that
$$
\lim_{\lambda \rightarrow \infty}\E\left[\left(\frac{T_k-b_\lambda}{a_\lambda}\right)\right] = \sum_{i=1}^{k-1}\frac{1}{i}, \qquad \lim_{\lambda \rightarrow \infty}\E\left[\left(\frac{T_k-b_\lambda}{a_\lambda}\right)^2\right] = \left(\sum_{i=1}^{k-1}\frac{1}{i}\right)^2 + \frac{\pi^2}{3}- \left(\sum_{i=1}^{k-1}\frac{1}{i^2}\right), 
$$
and therefore
\begin{align}
    \E[T_k] &= \left(\sum_{i=1}^{k-1}\frac{1}{i}\right)\frac{1}{\lambda} + b_\lambda + o(1/\lambda) \label{eqn:YIkthmean}\\
    \Var[T_k^2] &= \left(\frac{\pi^2}{3}-\sum_{i=1}^{k-1}\frac{1}{i^2}\right)\frac{1}{\lambda^2} + o(1/\lambda^2) \nonumber
\end{align}
by plugging in $a_\lambda = 1/\lambda$ and using $b_\lambda$ from Theorem \ref{thm:YIlimitDist} depending on the short time behavior of a searcher.

\subsection{Comparisons with branching models}\label{sec:comparisontobranching}
In contrast to our YI model, where all added searchers start from the same distribution of starting locations, one could also consider the case where, each time a new searcher is added, an existing searcher is chosen uniformly at random to be a parent and the new searcher is added at the parent's location. In this section, we compare results for YI with previous work for a branching process (BP) on a network modeling cancer as well as branching brownian motion (BBM).

\subsubsection{Comparison to BP}\label{sec:branchingnetwork}

Refs.~\cite{durrett2015branching, durrett2010evolution} modeled cancer evolution by starting with a single cancer cell of type $0$. Cells of type $i$ reproduce according to a branching process with net fitness $\lambda_i < \lambda_{i+1}$ and mutate from type $i$ to type $i+1$ with rate $\mu_{i+1}$. When a mutation occurs, the type $i+1$ population increases by 1 and the type $i$ population stays the same. In the small mutation limit, they found expressions for the first time a cell with $n$ mutations appears, which we call $T_1^{BP, n}$, as well as the asymptotic population size of cells of type $n$. Ref.~\cite{nicholson2023sequential} extended their results so that fitnesses do not need to be increasing. When $\lambda_i = \lambda$ does not vary with cell type, then the process is a BP on a linear chain and, by equations 4 and 5 in \cite{nicholson2023sequential},
$$
P\left[T_1^{BP, n} > \frac{x}{\lambda} + \frac{1}{\lambda}\ln\left(\frac{\lambda^n}{\mu_1\mu_2\ldots \mu_{k}}\frac{(n-1)!}{\ln(\lambda/\mu_n)^{n-1}}\right)\right] \rightarrow \frac{1}{1+e^x}
$$
as the mutation rates approach $0$. By a units argument, rather than letting the mutation rates go to $0$, we can instead let $\lambda \rightarrow \infty$ and find the same result. 

Now, consider the corresponding problem for YI process - the FPT of the YI process on a network that is a linear chain, searchers move from compartment $i$ to $i+1$ with rate $\mu_{i+1}$, and the target is compartment $n$. Recall from Section \ref{sec:model} that the FPT for one searcher to reach compartment $n$ has short time behavior $1-S(t) \sim At^p$ where $p=n$ is the length of the shortest path and $A = \mu_1 \ldots \mu_n /n!$ is the product of rates along the shortest path divided by $p!$. Applying Theorem \ref{thm:YIlimitDist}, we then find
$$
\lim_{\lambda \rightarrow \infty} S_{YI, 1}\left(\frac{x}{\lambda} + \frac{1}{\lambda}\ln\left(\frac{\lambda^n}{\mu_1 \ldots \mu_n}\right)\right) = \frac{1}{1+e^x}.
$$
The two survival probabilities for the BP and YI processes are very close in the large $\lambda$ limit, with the same limiting distribution. Even though the BP is always faster than YI in this case (the new searcher is always added to the farthest node from the target in YI), the difference is only a slight shift in the median.

\subsubsection{Comparison to BBM}

In the case of BBMs, seminal works \cite{bramson1978maximal, bramson1983convergence, lalley1987conditional} studied the rightmost particle at time $t$ of a BBM taking place on $\mathbbm{R}$. They found that the probability the rightmost particle at time $t$ of a BBM is larger than $x$ is the solution to a Fisher-KPP equation, and that it can be expressed as a Gumbel shifted by a constant and the limit of a derivative martingale. For large time $t$, the rightmost particle at time $t$ is around
$$
m_t = \sqrt{2}t - \frac{3}{2\sqrt{2}}\ln(t).
$$

Although the rightmost particle being past $x$ at time $t$ is not the same as the probability a BBM has reached $x$ before time $t$, one can expect them to be similar (this is akin to the so-called ``non-backtracking approximation'' in \cite{hass2024first, hass2024extreme}). Recent work by Kim et al in 2023 \cite{kim2023maximum} have generalized results from $\mathbbm{R}$ to $\mathbbm{R}^d$. 

Analogous to the network example in section~\ref{sec:branchingnetwork}, let us examine the FPT of a BBM on the real line starting at $0$ searching for a target at $L>0$. Unlike in network case, using a BBM rather than YI can speed up searches by a factor of 2. The remainder of this section proves this claim. 

Let $M_{\lambda, D}(t)$ denote the maximum ever achieved for a BBM with diffusivity $D$ and branching rate $\lambda$ up to time $t$. Note that a standard Brownian motion on $\mathbb{R}$ has diffusivity $1/2$. Theorem 10 from \cite{berestycki2017branching} showed that for some constant $C^*$,
$$
\lim_{t\rightarrow \infty}P(M_{1, 1/2}(t) < m_t+C^*) = \frac{1}{2},
$$
which naturally implies that
$$
\lim_{\lambda\rightarrow \infty}P(M_{1, 1/2}(\lambda t) < m_{\lambda t}+C^*) = \frac{1}{2}.
$$
To relate back to the general case of $\lambda$ branching and $D$ diffusivity, note that by Brownian scaling,
\begin{align*}
    P(M_{\lambda, D}(t) < L) &= P\bigg(\sqrt{\frac{\lambda}{2D}}M_{\lambda, D}(t) < \sqrt{\frac{\lambda}{2D}}L\bigg) \\
    &= P\bigg(\sqrt{\lambda}M_{\lambda, 1/2}(t) < \sqrt{\frac{\lambda}{2D}}L\bigg) \\
    &= P\bigg(M_{1, 1/2}( \lambda t) < \sqrt{\frac{\lambda}{2D}}L\bigg).
\end{align*}
Putting the two equations together, we find that for large $\lambda$, the median of the FPT to $L>0$ for a BBM with diffusivity $D$ and branching rate $\lambda$ is when $t$ solves
$$
m_{\lambda t}+C^* = \sqrt{\frac{\lambda}{2D}}L.
$$
Taking the leading order estimate $\sqrt{2}\lambda t$ of $m_{\lambda t}+C^*$, our leading order estimate of the median is then
$$
t=\frac{L}{2\sqrt{D\lambda}}.
$$
For the median time for YI, using \eqref{eqn:YImedian} with $k=1$ yields the leading term $2\sqrt{C/\lambda}$. Applying $C=L^2/(4D)$ from \eqref{eqn:diffusionTimescale} then yields $L/\sqrt{D\lambda}$, which is twice as large as the leading order estimate for BBM. 

\section{Numerics}\label{sec:numerics}
As in \cite{tung2025first}, we examine the speed of convergence for both the distribution and the mean under three scenarios - diffusion on $\mathbb{R}$, diffusion on $\mathbb{R}^3$, and a random walk on a discrete grid.

\subsection{Diffusive search in one space dimension}\label{sec:oned}
In the first example, searchers immigrate to $L>0$ and undergo Brownian motion on $\mathbb{R}$ with diffusivity $D>0$ until they reach the target, the origin. It is well known \cite{carslaw1959} that the survival probability for a single searcher is
$$
S(t)
=P(\tau>t)
= \text{erf}\bigg(\sqrt{\frac{L^2}{4Dt}}\bigg),
$$
where erf is the error function. It follows that
$$
1-S(t) \sim At^pe^{-C/t}, \qquad A = \sqrt{\frac{4D}{L^2\pi}}, \qquad p= \frac{1}{2}, \qquad C = \frac{L^2}{4D}.
$$

\subsection{Diffusive escape in three space dimensions}
In the second example, searchers immigrate to the origin and undergo Brownian motion on $\mathbb{R}^3$ with diffusivity $D>0$ until they reach the target, a sphere of radius $L>0$. It is well known \cite{carslaw1959} that the survival probability for a single searcher is
$$
S(t) = 1-\sqrt{\frac{4L^2}{\pi D t}}\sum_{j=0}^\infty \exp\left[-\frac{L^2}{4Dt}(2j+1)^2\right],
$$
and therefore
$$
1-S(t) \sim At^pe^{-C/t}, \qquad A = \sqrt{\frac{4L^2}{D\pi}}, \qquad p= -\frac{1}{2},\qquad C = \frac{L^2}{4D}.
$$

\subsection{Search on a discrete network}
In the third example, searchers immigrate to the upper left corner corner of a $5 \times 5$ discrete square grid and move with rate $1$ between neighboring vertices until they find the target located one down and two to the right. The survival probability for a single searcher can be expressed as the product of elementary row vectors and the matrix exponential of an appropriately constructed rate matrix \cite{norris1998}. It follows from Section \ref{sec:model} (see Proposition~1 in \cite{lawley2020networks} for details) that the short time behavior is 
$$
1-S(t) \sim At^p,\qquad A = \frac{1}{2}, \qquad p = 3.
$$

\subsection{Comparison to numerics}
For these three examples, Figure \ref{fig:numericsLimitingDist} compares the probability densities of the fastest FPT $T_1$ to their limits obtained in Theorems \ref{thm:TIlimitDist} and \ref{thm:YIlimitDist}. Similarly, Figure \ref{fig:numericsMeanError} compares the means of the fastest FPT $T_1$ to their limits obtained in Theorems \ref{thm:TIlimitMom} and \ref{thm:YImoments}. 

Note that for faster convergence, in the case of TII where $1-S(t) \sim A_0t^{p_0}e^{-C_0/t}$, we do not use the $(a_\lambda, b_\lambda)$ pair given in \ref{thm:TIlimitDist} and instead use the pair
$$
(a_\lambda, b_\lambda) = \left(\frac{C_0}{p_0^2 W(1+W)} , \frac{C_0}{p_0 W} \right), \qquad 
        W
        =\begin{cases}
            W_0\Big[\frac{C_0}{p_0}\big(A_0\lambda\big)^{\frac{1}{p_0}}\Big] & \text{if }p_0>0,\\
            W_{-1}\Big[\frac{C_0}{p_0}\big(A_0\lambda\big)^{\frac{1}{p_0}}\Big]& \text{if }p_0<0,
        \end{cases}
$$
where $W_0(z)$ denotes the principal branch of the LambertW function and $W_{-1}(z)$ denotes the lower branch \cite{corless1996}. Our alternative pair works because, by elementary properties of convergence,
\begin{align}\label{eq:elem}
    \lim_{\lambda\to\infty}\frac{a_{\lambda}'}{a_{\lambda}}
=1,\quad
\lim_{\lambda\to\infty}\frac{b_{\lambda}'-b_{\lambda}}{a_{\lambda}}
=0.
\end{align}
is a sufficient condition to showing $(a_{\lambda}', b_{\lambda}')$ suffices. We then use the asymptotics of the LambertW function \cite{corless1996},
\begin{align*}
    W_0(z)
    &=\ln z-\ln\ln z + o(1)\quad\text{as }z\to\infty,\\
    W_{-1}(z)
    &=\ln(-z)-\ln(-\ln(-z)) + o(1)\quad\text{as }z\to0-.
\end{align*}

\begin{figure}[ht]
    \centering
    \includegraphics[width=0.95\linewidth]{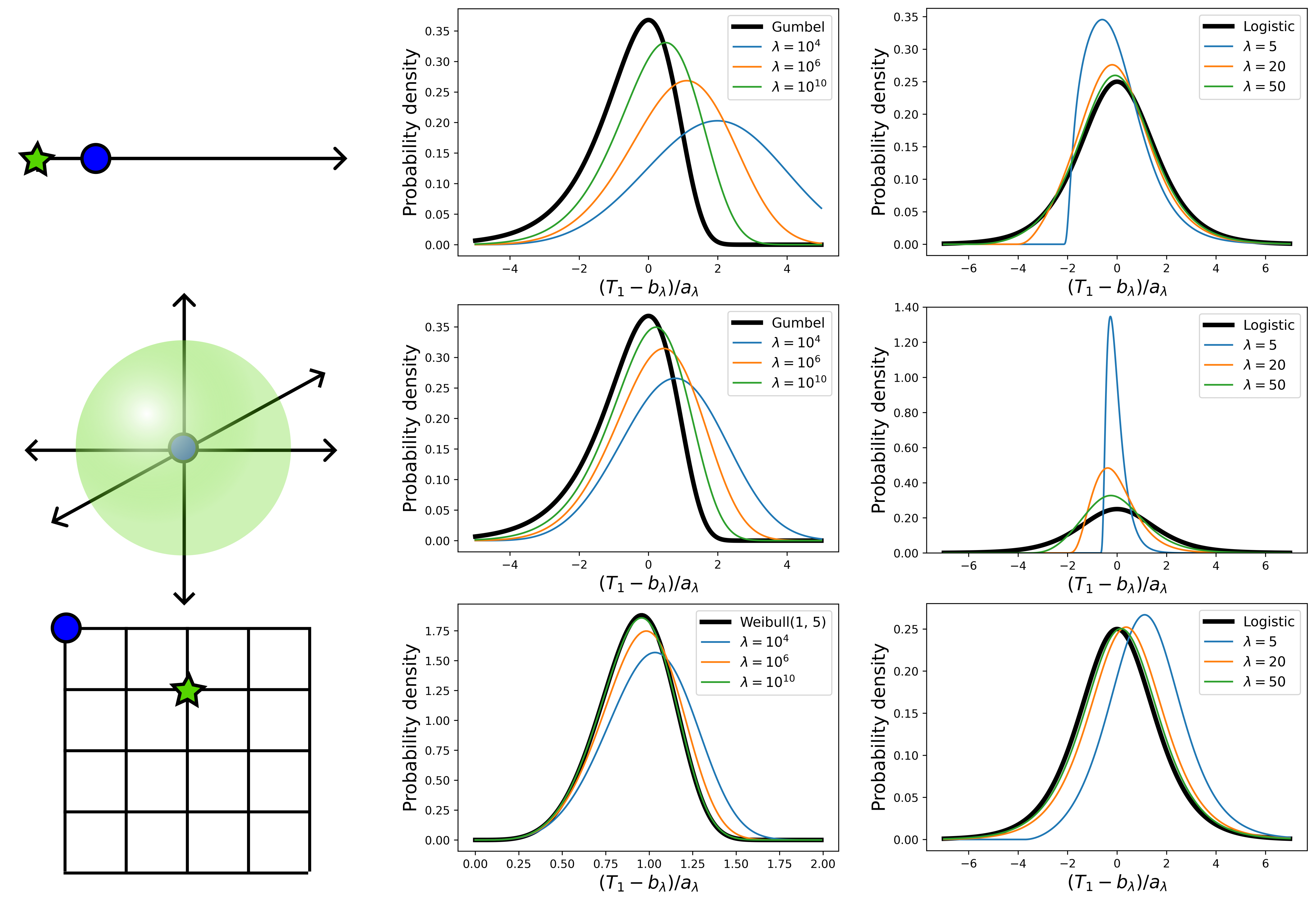}
    \caption{The first column depicts the three search examples used for numerics. Searchers start at the blue ball and their target is the green (the star, the sphere, and the star). The second column examines convergence in distribution for TII when $u(t)=t$ and shows corresponding plots of the probability densities of scaled first passage time $T_1$ for large $\lambda$ as well as the theoretical limiting distribution of Weibull or Gumbel given by Theorems \ref{thm:TIlimitDist}. The third column is similar to the second but instead examines the YI case. We take $L=D=1$ for the diffusion examples.}
    \label{fig:numericsLimitingDist}
\end{figure}

\begin{figure}[ht]
    \centering
    \includegraphics[width=0.95\linewidth]{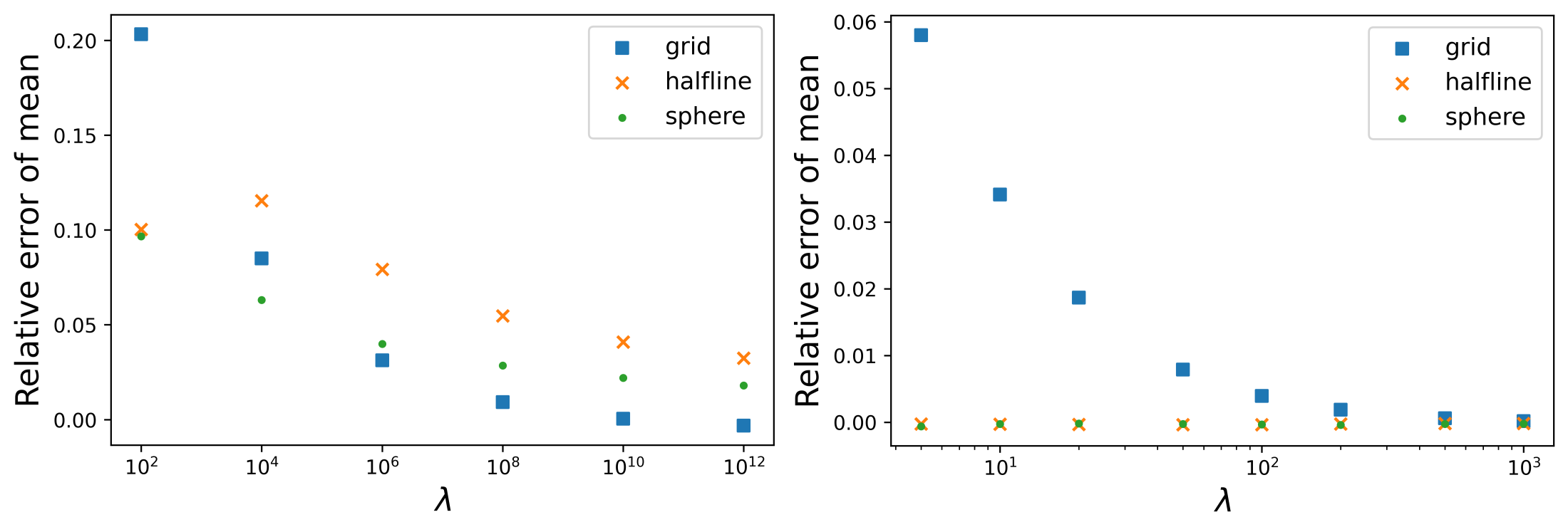}
    \caption{Both plots show the relative error in estimating the mean fastest FPT, $\E[T_1]$, as a function of $\lambda$ using \eqref{eqn:TIkthmean} and \eqref{eqn:YIkthmean} with $k=1$. The left plot is for TII when $u(t)=t$ and the right plot is for YI. We take $L=D=1$ for the diffusion examples.}
    \label{fig:numericsMeanError}
\end{figure}

\section{Discussion}\label{sec:discussion}

The majority of FPT theory has focused on a single random searcher \cite{redner_first_2014}. Though the analysis of the fastest searcher among many searchers dates back to the 1983 work of Weiss, Shuler, and Lindenberg \cite{weiss1983}, the broad biophysical relevance of such extreme FPTs has only been appreciated within the past decade \cite{grebenkov2024target}. Mathematical models of extreme FPTs have typically assumed that all $N\gg1$ searchers are initially present in the domain. 

In this paper, we studied extreme FPTs for searchers which progressively enter the domain over time under what has been termed an `immigration'' model \cite{tung2025first}.
We introduced two models of immigration (TII and YI) and studied extreme FPTs for each model. TII generalizes the model of Campos and Mendez \cite{campos2024} by allowing the immigration rate to change according to a given function of time. In YI, searchers enter at a rate proportional to the current number of searchers, and thus YI models systems in which searchers originate from births, replications, or infections.

In section~\ref{sec:numerics}, unlike for YI, we found slow convergence for TII in the large $\lambda$ limit. This is consistent with the recent work of Grebenkov, Metzler, and Oshanin, who introduced an interesting alternative model where searchers also progressively enter the domain over time under what has been termed an `injection'' model \cite{grebenkov2025fastest}. In their model, the total number of searchers $N\gg1$ is given, and these authors considered two choices for the times that the searchers enter the domain. They supposed that either (i) a given fraction $N \psi(t)\Delta t$ of searchers enters between time $t$ and $t+\Delta t$, or (ii) searchers enter at iid random times. Unlike our immigration models, (ii) keeps the FPTs of each searcher iid, making classical extreme value theory applicable. These authors found that scenarios (i) and (ii) both significantly slowed down the convergence of extreme FPTs to their limiting distributions in the large $N$ limit.

From the perspective of extreme value theory \cite{colesbook}, it is notable that we could obtain extreme value distributions exactly since the random variables are strongly correlated \cite{majumdar2020extreme}. Furthermore, the YI extreme distribution turned out not to follow the ``universal'' Frechet, Gumbel, and Weibull family of extreme value distributions. Instead, the YI extreme turned out to be the difference of two independent Gumbel random variables.

YI can also be considered as an alternative model of branching random walks and BBMs. The key difference is that in YI, the initial spatial location of new searchers always has the same probability distribution, whereas a new searcher in a branching process begins at a location that is uniformly distributed among the positions of existing searchers. This difference allowed us to prove general results for a wide variety of search processes undergoing YI. In contrast, the analysis of branching processes depends on the specifics of the search model (i.e.\ the methods used to study the branching processes in \cite{durrett2015branching, durrett2010evolution} differ from the methods used to study BBM \cite{berestycki2017branching, kim2023maximum}). Nevertheless, our results show that the extreme FPTs of YI differ only mildly from the analogous results for branching processes (see section~\ref{sec:comparisontobranching}).

\appendix
\section{A brief summary of large $N$ results} 
\label{app:largeN}
As the proofs of the theorems regarding TII in this paper rely heavily on results for FPTs with large $N$ initial searchers and no immigration, we provide a brief summary of relevant results here for convenience. For a more detailed treatment, see \cite{lawley2024competition}. 

Consider the case where there are $N$ initial iid searchers, each with survival probability $S(t)$. Then the survival probability of the system is the probability no searchers have found the target, which is
$$
S_N(t) = S(t)^N.
$$
Using classical extreme value theory and some algebra, one can show that under the survival probabilities that we've considered, 
\begin{theorem}[From references \cite{lawley2020dist} and \cite{madrid2020comp}]
    $$
    \lim_{\lambda \rightarrow \infty} S_N(a_N x + b_N)
        = 
    \begin{cases}
      \exp\left[-x^{p}\right] & \text{if }1-S(t) \sim At^p \\
      \exp\left[-e^x\right] & \text{if }1-S(t) \sim At^pe^{-C/t}
    \end{cases}
    $$
    where
    $$
    (a_N, b_N) = \begin{cases}
       ((AN)^{-1/p}, 0) & \text{if }1-S(t) \sim At^p \\
      \left(\frac{C}{(\ln N)^2}, \frac{C}{\ln N} + \frac{Cp \ln(\ln (N))}{(\ln N)^2} -\frac{C\ln(AC^p)}{(\ln N)^2}\right) & \text{if }1-S(t) \sim At^pe^{-C/t}.
    \end{cases}
    $$
    \label{thm:largeNFPT}
\end{theorem}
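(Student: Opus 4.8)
The plan is to reduce everything to the elementary identity $S_N(t)=S(t)^N=\exp[N\ln S(t)]$, so that the entire problem becomes the asymptotic analysis of $N\ln S(a_N x+b_N)$ as $N\to\infty$. In both regimes the scalings force $a_N x+b_N\to 0^+$, so $1-S(a_N x+b_N)$ is small and I would write $\ln S=\ln\big(1-(1-S)\big)=-(1-S)+O\big((1-S)^2\big)$. The quadratic remainder, after multiplication by $N$, is of order $N\cdot(1/N)^2=1/N\to 0$, since the scalings are chosen precisely so that $N\big(1-S(a_N x+b_N)\big)$ has a finite limit. Thus it suffices to show, in each case, that $N\big(1-S(a_N x+b_N)\big)$ converges to $x^p$ (power law) or $e^x$ (exponential); exponentiating then yields $\exp[-x^p]$ and $\exp[-e^x]$. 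Because both limiting survival functions are continuous in $x$, this pointwise convergence is exactly convergence in distribution, so no extra tightness argument is required.

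For the power-law case $1-S(t)\sim At^p$, I would substitute $b_N=0$ and $a_N=(AN)^{-1/p}$ directly. For $x>0$ the argument $a_N x\to 0^+$, so $1-S(a_N x)=A(a_N x)^p(1+o(1))=(x^p/N)(1+o(1))$, and multiplying by $N$ gives $x^p$. For $x\le 0$ the argument is nonpositive, hence $S(a_N x)=1$ and $S_N$ is identically $1$, matching the convention that the limiting Weibull survival function equals $1$ on $(-\infty,0]$. This case is essentially immediate once the substitution is made.

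The exponential case $1-S(t)\sim At^pe^{-C/t}$ is where the real work lies, and it is the step I expect to be the main obstacle. Here one must show $\ln\big[N A t^p e^{-C/t}\big]\to x$ with $t=a_N x+b_N$, i.e.\ that $\ln N+\ln A+p\ln t-C/t\to x$. Writing $t=b_N\big(1+a_N x/b_N\big)$ with $a_N/b_N\sim 1/\ln N\to 0$, a first-order expansion gives $C/t=C/b_N-(Ca_N/b_N^2)x+\cdots$, where $Ca_N/b_N^2\to 1$, so the linear-in-$x$ piece contributes exactly $-x$. The constant piece $C/b_N$ must be expanded by the geometric-series expansion of the reciprocal of $b_N=\tfrac{C}{\ln N}\big[1+\tfrac{p\ln\ln N-\ln(AC^p)}{\ln N}\big]$, which yields $C/b_N=\ln N-p\ln\ln N+\ln(AC^p)+o(1)$. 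Meanwhile $p\ln t=p\ln C-p\ln\ln N+o(1)$. Summing $\ln N+\ln A+p\ln t-C/t$, the $\ln N$ terms cancel, the $p\ln\ln N$ terms cancel (this is exactly what the $\tfrac{Cp\ln\ln N}{(\ln N)^2}$ correction in $b_N$ is engineered to do), and the constants $\ln A+p\ln C$ cancel against $\ln(AC^p)$, leaving precisely $+x$.

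The only genuine difficulty is the bookkeeping: one must expand $C/b_N$ to enough orders that the $o(1)$ error is truly negligible, and verify that the hidden $(1+o(1))$ factor in the asymptotic equivalence $1-S(t)\sim At^pe^{-C/t}$ does not disturb the limit, which it does not since it enters multiplicatively and tends to $1$ as $t\to 0^+$. Since the result is quoted from \cite{lawley2020dist, madrid2020comp}, I would present this as a careful verification of the stated $(a_N,b_N)$ rather than a from-scratch derivation.
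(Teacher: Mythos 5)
Your verification is correct: the reduction $S_N = \exp[N\ln S]$ with $N\ln S = -N(1-S)+O(N(1-S)^2)$, followed by the check that $N\bigl(1-S(a_Nx+b_N)\bigr)\to x^p$ or $e^x$ (including the cancellation of the $\ln N$, $p\ln\ln N$, and $\ln(AC^p)$ terms in the exponential case), is exactly the ``classical extreme value theory and some algebra'' the paper alludes to. Note that the paper does not prove this theorem itself but imports it from \cite{lawley2020dist} and \cite{madrid2020comp}, so your direct computation stands as a self-contained substitute for that citation and is consistent with how those references establish it.
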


Analogous results hold for the $k$th fastest FPT. Because the searchers are iid, the number of successful searchers follows a Binomial distribution with $N$ trials and probability of success $1-S(t)$. Therefore, the probability that $k$ searchers have not found the target must then be 
$$
S_{N, k}(t) = \frac{n!}{(n-k)!(k-1)!}\int_0^{S(t)} u^{n-k}(1-u)^{k-1} du. 
$$
Once again, classical extreme value theory yields
\begin{theorem}[From references \cite{lawley2020dist} and \cite{madrid2020comp}]
    $$
    \lim_{\lambda \rightarrow \infty} S_{N, k}(a_N x + b_N)
        = 
    \begin{cases}
      \frac{1}{(k-1)!}\Gamma(k, x^{p+1}) & \text{if }1-S(t) \sim At^p \\
      \frac{1}{(k-1)!}\Gamma(k, e^x) & \text{if }1-S(t) \sim At^pe^{-C/t}.
    \end{cases}
    $$
    \label{thm:largeNkthFPT}
\end{theorem}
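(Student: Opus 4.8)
The plan is to read the stated formula for $S_{N,k}(t)$ as a binomial tail probability and then pass to a Poisson limit. Since the $N$ searchers are iid and each has found the target by time $t$ with probability $1-S(t)$, the number of successful searchers is $\binom(N,1-S(t))$, and the displayed incomplete-beta integral is exactly the binomial CDF identity
\[
\frac{N!}{(N-k)!(k-1)!}\int_0^{S(t)} u^{N-k}(1-u)^{k-1}\,\dd u
= P\big(\binom(N,1-S(t))\le k-1\big).
\]
Thus it suffices to evaluate this probability at $t=t_N:=a_N x+b_N$ and let $N\to\infty$.

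First I would fix the scaling of the binomial mean. Writing $q_N:=1-S(t_N)$, I claim $Nq_N\to\mu$, with $\mu=x^{p}$ in the power-law case and $\mu=e^x$ in the exponential case. Rather than redo the log-asymptotics that define $(a_N,b_N)$, I would extract this from Theorem~\ref{thm:largeNFPT}: that theorem asserts $S(t_N)^N=(1-q_N)^N\to e^{-\mu}$ for the very same $(a_N,b_N)$, and since $t_N\to0$ forces $q_N\to0$, the estimate $\ln(1-q_N)=-q_N(1+o(1))$ converts $N\ln(1-q_N)\to-\mu$ into $Nq_N\to\mu$. In the power-law case this yields $\mu=x^{p}$, consistent with the $k=1$ reduction $S_{N,1}=S_N$.

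With $N\to\infty$ and $Nq_N\to\mu<\infty$, the law of rare events gives $\binom(N,q_N)\Rightarrow\textup{Poisson}(\mu)$; concretely, for each fixed $i\in\{0,\dots,k-1\}$ one checks $\frac{N!}{i!(N-i)!}q_N^i(1-q_N)^{N-i}\to e^{-\mu}\mu^i/i!$. Because the binomials and the limit are integer-valued, summing these finitely many terms gives convergence of the CDF at the integer $k-1$,
\[
S_{N,k}(t_N)=P\big(\binom(N,q_N)\le k-1\big)\;\longrightarrow\;\sum_{i=0}^{k-1}e^{-\mu}\frac{\mu^i}{i!}.
\]
I would then invoke the standard identity $\sum_{i=0}^{k-1}e^{-\mu}\mu^i/i!=\Gamma(k,\mu)/(k-1)!$, proved by integrating $\Gamma(k,\mu)=\int_\mu^\infty s^{k-1}e^{-s}\,\dd s$ by parts $k-1$ times, to arrive at the asserted limit $\frac{1}{(k-1)!}\Gamma(k,\mu)$.

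The argument is essentially a corollary of Theorem~\ref{thm:largeNFPT}, so most steps are routine. The one point deserving care is the mean-scaling step: I would verify that the $(a_N,b_N)$ appearing here are literally those of Theorem~\ref{thm:largeNFPT} (this is where the delicate log-asymptotics of the exponential case are hidden), so that $Nq_N\to\mu$ may be quoted rather than recomputed. Given that, the Poisson limit together with the incomplete-gamma identity closes the proof with no further obstacle.
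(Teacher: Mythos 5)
Your proof is correct and is essentially the standard argument behind this cited result: the paper itself only records the binomial representation of the number of successful searchers and defers the limit to the references, while you carry out the same route explicitly via the Poisson (law of rare events) approximation to the binomial tail and the identity $\sum_{i=0}^{k-1}e^{-\mu}\mu^i/i!=\Gamma(k,\mu)/(k-1)!$. One substantive remark: your derivation gives $\mu=x^{p}$ in the power-law case, which is forced by consistency with Theorem~\ref{thm:largeNFPT} and the $k=1$ reduction $S_{N,1}=S_N$, so the exponent $p+1$ printed in the statement (i.e.\ $\Gamma(k,x^{p+1})$) appears to be a typo and your $\Gamma(k,x^{p})$ is the correct limit.
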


Letting $\mathcal{T}_{k, N}$ be the $k$th FPT for $N$ initial searchers, one can then show convergence of moments. Define
\begin{equation}
    \widetilde{\mathcal{T}_{k, N}} := \frac{\mathcal{T}_{k, N} - b_{N}}{a_N}.
    \label{eqn:TkNdef}
\end{equation}
Then
\begin{theorem}[From references \cite{lawley2020dist} and \cite{madrid2020comp}]
    If $\E[\mathcal{T}_N] < \infty$ for some $N\ge1$, then
    $$
    \lim_{N\rightarrow \infty}\E\left[\widetilde{\mathcal{T}_{k, N}}^m\right] = \begin{cases}
       \frac{1}{(k-1)!}\Gamma\left(k+\frac{m}{p+1}\right) & \text{if }1-S(t) \sim At^p \\
       \frac{1}{(k-1)!}\frac{d^m}{dt^m}\Gamma\left(k+t\right)\bigg\rvert_{t=0} & \text{if }1-S(t) \sim At^pe^{-C/t}.
    \end{cases}
    $$
    \label{thm:largeNFPTMom}
\end{theorem}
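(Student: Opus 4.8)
The plan is to upgrade the convergence in distribution already recorded in Theorem~\ref{thm:largeNkthFPT} to convergence of moments by establishing uniform integrability. Recall the standard fact that if $\widetilde{\mathcal{T}_{k,N}} \Rightarrow X$ and the family $\{|\widetilde{\mathcal{T}_{k,N}}|^m\}_N$ is uniformly integrable, then $\E[\widetilde{\mathcal{T}_{k,N}}^m] \to \E[X^m]$. Thus the argument splits into two tasks: (i) computing $\E[X^m]$ for the limiting law, and (ii) proving uniform integrability. A convenient sufficient condition for (ii) is an $L^{m'}$ bound for some $m' > m$, namely $\sup_{N \ge N_*}\E[|\widetilde{\mathcal{T}_{k,N}}|^{m'}] < \infty$; since the claim concerns $N \to \infty$, it suffices to control all sufficiently large $N$ (the finitely many small $N$, whose moments may be infinite, are irrelevant to the limit).

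Task (i) is a direct computation exploiting the explicit limits. In the power case the limiting survival function $\frac{1}{(k-1)!}\Gamma(k, x^{p+1})$ is precisely the law of $W^{1/(p+1)}$ where $W \sim \Gamma(k,1)$; hence $\E[X^m] = \E[W^{m/(p+1)}] = \frac{1}{(k-1)!}\Gamma\!\left(k + \frac{m}{p+1}\right)$. In the exponential case the survival function $\frac{1}{(k-1)!}\Gamma(k,e^x)$ is the law of $\ln W$ with $W \sim \Gamma(k,1)$, so $\E[X^m] = \frac{d^m}{dt^m}\E[W^t]\big|_{t=0}$ with $\E[W^t] = \Gamma(k+t)/(k-1)!$, giving the stated derivative of the Gamma function.

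Task (ii) is the crux. Writing each moment as a tail integral and substituting $t = a_N x + b_N$ reduces the problem to dominating $S_{N,k}(a_N x + b_N)$ (the right tail) and $1 - S_{N,k}(a_N x + b_N)$ (the left tail, relevant only in the exponential case where $b_N > 0$) by envelopes integrable against $|x|^{m'-1}\,dx$ uniformly in $N$. Using $S_{N,k}(t) = P(\mathrm{Bin}(N, 1-S(t)) \le k-1)$, the bulk and near-bulk ranges of $x$ are handled by the small-time asymptotics of $1-S$: the scaling $(a_N,b_N)$ is chosen exactly so that $N(1-S(a_N x + b_N)) \to x^{p+1}$ (resp. $e^x$), producing envelopes of the form $(1 + x^{p+1})^{k-1}e^{-c\,x^{p+1}}$ (resp. a polynomially corrected $e^{-c|x|}$), which are integrable; the left tail is moreover cut off at $x = -b_N/a_N$ since $\mathcal{T}_{k,N} \ge 0$.

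The genuine obstacle is the far right tail, where $t$ is large and the small-time asymptotics say nothing. Here the hypothesis $\E[\mathcal{T}_{N_0}] = \int_0^\infty S(t)^{N_0}\,dt < \infty$ does the work. Since $S$ is nonincreasing, $t\,S(t)^{N_0} \le \int_0^t S(s)^{N_0}\,ds \le \E[\mathcal{T}_{N_0}]$, so $S(t) = O(t^{-1/N_0})$. Bounding the order-statistic tail by its dominant binomial term, $S_{N,k}(t) \le C_k N^{k-1} S(t)^{N-k+1}$, and using $S(t) \le \rho < 1$ beyond a fixed threshold, one trades a large power of $S$ against powers of $t$: choosing $N$ large enough that $N - k + 1 \ge jN_0$ with $j > m'$ and splitting $S(t)^{N-k+1} = S(t)^{jN_0}\,S(t)^{N-k+1-jN_0} \le (C/t)^{j}\,\rho^{\,N-k+1-jN_0}$ gives $t^{m'-1}S_{N,k}(t) \le C'\,N^{k-1}\rho^{\,N-k+1-jN_0}\,t^{m'-1-j}$, which is integrable at infinity and, after the $a_N^{-m'}$ rescaling, vanishes geometrically in $N$. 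Combining the near and far estimates yields the uniform $L^{m'}$ bound, completing (ii). The main difficulty throughout is bookkeeping the interplay between the $N$-dependent scaling $(a_N, b_N)$ and these two regimes; the decay rate $S(t) = O(t^{-1/N_0})$ extracted from the single finite-moment hypothesis is the key quantitative input that renders all higher moments finite and uniformly bounded for large $N$.
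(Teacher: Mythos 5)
This theorem is one the paper does not prove at all: it is imported verbatim from \cite{lawley2020dist} and \cite{madrid2020comp} as background for the TII arguments, so there is no in-paper proof to compare against line by line. That said, your proposal is a correct reconstruction of the argument used in those references and is consistent with how the paper itself handles the matter (its Corollary~\ref{coro:largeNUI} likewise reduces everything to uniform integrability plus the convergence in distribution of Theorem~\ref{thm:largeNkthFPT}). Your two main ingredients are the right ones: the identification of the limiting moments via $X=W^{1/(p+1)}$, resp.\ $X=\ln W$, with $W\sim\Gamma(k,1)$ is exactly how the stated formulas arise, and the uniform-integrability step via an $L^{m'}$ bound split into a near-origin regime (governed by the small-time asymptotics of $1-S$, which is what the scaling $(a_N,b_N)$ is built for) and a far-tail regime is the standard route. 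The far-tail estimate is the genuinely nontrivial part, and your extraction of $S(t)=O(t^{-1/N_0})$ from $t\,S(t)^{N_0}\le\int_0^t S(s)^{N_0}\,ds\le\E[\mathcal{T}_{N_0}]$, combined with $S_{N,k}(t)\le kN^{k-1}S(t)^{N-k+1}$ and the trade of $jN_0$ powers of $S$ against $t^{-j}$ with the surplus $\rho^{N-k+1-jN_0}$ absorbing the polynomial (or polylogarithmic) growth of $N^{k-1}a_N^{-m'}$, is precisely the mechanism that makes the single finite-moment hypothesis suffice for all moments. Two small caveats: the near-bulk envelopes in the exponential case (two-sided uniform control of $N(1-S(a_Nx+b_N))$ by multiples of $e^x$ over the window $-b_N/a_N\le x\le \delta_N$) are asserted rather than derived, though this is routine and is carried out in \cite{lawley2020dist}; and your moment computation faithfully tracks the exponent $p+1$ appearing in the statement, which is internally consistent with Theorem~\ref{thm:largeNkthFPT} as printed but sits oddly against the normalization $a_N=(AN)^{-1/p}$, under which $N(1-S(a_Nx))\to x^{p}$ rather than $x^{p+1}$ --- this discrepancy is in the paper's statements, not in your argument.
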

We further note that a fact that we will need in our proofs.
\begin{corollary}
    $\{\widetilde{\mathcal{T}_{k, N}}^m\}_{N}$ is uniformly integrable.
    \label{coro:largeNUI}
\end{corollary}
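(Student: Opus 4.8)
The plan is to deduce uniform integrability from $L^2$-boundedness, using the convergence-of-moments statement in Theorem \ref{thm:largeNFPTMom} as the only nontrivial input. The guiding principle is the standard criterion that a family $\{X_N\}$ is uniformly integrable whenever $\sup_N \E[|X_N|^q] < \infty$ for some $q > 1$; I will apply it with $X_N = \widetilde{\mathcal{T}_{k,N}}^m$ and $q = 2$.

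First I would record this criterion together with its elementary justification: for any $K > 0$, on the event $\{|X_N| > K\}$ one has $|X_N| \le K^{1-q}|X_N|^{q}$, so that
$$
\E\big[|X_N|\,\mathbf{1}_{\{|X_N| > K\}}\big] \le K^{1-q}\,\E\big[|X_N|^{q}\,\mathbf{1}_{\{|X_N| > K\}}\big] \le K^{1-q}\,\sup_N \E\big[|X_N|^{q}\big].
$$
Since $q = 2 > 1$ gives $K^{1-q} = K^{-1} \to 0$, the left-hand side tends to $0$ uniformly in $N$ as $K \to \infty$, which is precisely uniform integrability, provided the supremum on the right is finite.

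Next I would supply the required $L^2$ bound. The key observation is that $2m$ is an even integer, so
$$
\big(\widetilde{\mathcal{T}_{k,N}}^{\,m}\big)^2 = \widetilde{\mathcal{T}_{k,N}}^{\,2m} = \big|\widetilde{\mathcal{T}_{k,N}}\big|^{2m} \ge 0;
$$
this squares away the fact that $\widetilde{\mathcal{T}_{k,N}}$ itself may be negative in the Gumbel regime (where $b_N > 0$). Applying Theorem \ref{thm:largeNFPTMom} with the even exponent $2m$ shows that $\E[\widetilde{\mathcal{T}_{k,N}}^{\,2m}]$ converges to a finite limit as $N \to \infty$, and a convergent sequence is bounded, giving $\sup_N \E[(\widetilde{\mathcal{T}_{k,N}}^{\,m})^2] = \sup_N \E[\widetilde{\mathcal{T}_{k,N}}^{\,2m}] < \infty$. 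The criterion above with $q = 2$ then yields the corollary.

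The one point requiring care—and the main obstacle—is finiteness of the $2m$-th moment for each individual index: for survival probabilities with heavy (power-law) large-time tails, such as one-dimensional diffusive search, $\E[\mathcal{T}_{k,N}^{\,2m}]$ is finite only once $N$ exceeds a threshold determined by $m$. This is exactly the regime addressed by the hypothesis of Theorem \ref{thm:largeNFPTMom}, and it is also the only regime relevant for the convergence-of-moments arguments in which this corollary is invoked; I would therefore state the conclusion for $N$ large enough that the $2m$-th moment is finite, which is all the later applications need.
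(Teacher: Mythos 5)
Your proof is correct, but it takes a genuinely different route from the paper's. The paper argues by cases: for $1-S(t)\sim At^pe^{-C/t}$ it simply cites the proof of Theorem~5 in \cite{lawley2020dist}, where uniform integrability is established directly, and for $1-S(t)\sim At^p$ it invokes the converse direction of the moment-convergence theorem (Billingsley, Theorem~3.6): convergence in distribution plus convergence of the means of the \emph{nonnegative} variables $\widetilde{\mathcal{T}_{k,N}}^m$ implies uniform integrability. That second argument only works in the power-law case because there $b_N=0$ and $\widetilde{\mathcal{T}_{k,N}}^m\ge 0$; in the Gumbel regime the centering makes odd powers signed, which is precisely why the paper must fall back on the external citation. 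Your $L^2$-boundedness argument sidesteps this dichotomy entirely: squaring gives the even exponent $2m$, so nonnegativity is automatic, Theorem~\ref{thm:largeNFPTMom} applied with $2m$ in place of $m$ yields a convergent (hence bounded) sequence of second moments of $X_N=\widetilde{\mathcal{T}_{k,N}}^m$, and the standard $\sup_N\E[|X_N|^q]<\infty$, $q>1$, criterion finishes the job. This buys a single self-contained argument covering both tail regimes without appealing to the internals of \cite{lawley2020dist}. The one caveat you correctly flag --- that $\E[\widetilde{\mathcal{T}_{k,N}}^{2m}]$ may be infinite for small $N$, so the family should be indexed by $N\ge N_0(m)$ --- is real but harmless, since every application of the corollary in the paper is a $\sup$ over $\lambda>\lambda^*$ with $\lambda^*$ free to be taken large; note the paper's own statement is silently subject to the same restriction.
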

\begin{proof}
    In the case $1-S(t) \sim At^pe^{-C/t}$, this is already shown in the proof of Theorem \ref{thm:largeNFPTMom} (see the proof of Theorem 5 of \cite{lawley2020dist}). In the case $1-S(t) \sim At^p$, because we have convergence of moments, convergence in distribution, and integrability and nonnegativity of $\widetilde{\mathcal{T}_{k, N}}^m$, it follows that we have uniform integrability (see Theorem 3.6 in \cite{billingsley2013}).
\end{proof}

\section{Proofs of theorems for TII}
Before going to the theorem proofs, we first introduce some terminology, then prove two lemmas that recur through the proofs. There are two related processes that will be referenced in the proofs.
\begin{enumerate}
    \item TII process. This is the process discussed in this paper. The $k$th passage time is denoted by $T_{k}$.
    \item Large $N$ process. This process is discussed in Appendix \ref{app:largeN}. We will further assume each searcher has survival probability
    $$
    S_*(t):=\exp\left[-\int_0^t u(t-s) (1-S(s)) ds \right] = \exp\left[-I(t) \right]
    $$
    The $k$th passage time is denoted by $\mathcal{T}_{k}$.
\end{enumerate}
Letting $T$ denote the $k$th FPT of one of the above processes, we also use the tilde to denote
\begin{equation}
    \widetilde{T} := \frac{T - b_{\lambda}}{a_\lambda}.
\end{equation}

On to the two lemmas. Lemma \ref{lem:betterablambdas} gives an alternative set of $a_\lambda$ and $b_\lambda$ that will be more natural for the proofs and Lemma \ref{lem:naturalnumbers} lets limits for integer valued $\lambda$ carry over to real valued $\lambda$.
\begin{lemma}
    When $I(t) \sim A_0t^{p_0}e^{-C_0/t}$, instead of the $(a_\lambda, b_\lambda)$ pair given in Theorem \ref{thm:TIlimitDist}, one may use
    \begin{equation}
        (a_\lambda, b_\lambda) = \left(\frac{C_0}{(\ln(\lambda))^2},\quad \frac{C_0}{\ln (\lambda)}
        +\frac{C_0p_0\ln(\ln(\lambda))}{(\ln (\lambda))^2}
        -\frac{C_0\ln(A_0C_0^{p_0})}{(\ln (\lambda))^2}\right)
        \label{eqn:betterablambdas}
    \end{equation}
    \label{lem:betterablambdas}
\end{lemma}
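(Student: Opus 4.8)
The plan is to reduce the exponential case of Theorem \ref{thm:TIlimitDist} to a single scalar asymptotic and then verify that asymptotic directly for the simpler normalizing pair \eqref{eqn:betterablambdas}. The power-law case needs no attention, since there $b_\lambda=0$ and $a_\lambda$ contains no logarithm of $C_0\lambda$, so the only thing at issue is the case $I(t)\sim A_0t^{p_0}e^{-C_0/t}$. By Proposition \ref{prop:SIequalityInhomo} together with the elementary identity $\tfrac{1}{(k-1)!}\Gamma(k,\mu)=e^{-\mu}\sum_{i=0}^{k-1}\mu^i/i!$, we have the exact representation $S_{TI,k}(t)=\tfrac{1}{(k-1)!}\Gamma\!\big(k,\lambda I(t)\big)$. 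Since $\mu\mapsto\Gamma(k,\mu)$ is a continuous strictly decreasing bijection, the conclusion of the theorem is \emph{equivalent} to the single requirement that, for each fixed $x$,
$$
\lambda\,I(a_\lambda x+b_\lambda)\longrightarrow e^{x}\qquad(\lambda\to\infty).
$$
Thus it suffices to establish this one limit with $(a_\lambda,b_\lambda)$ taken from \eqref{eqn:betterablambdas}.

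To do so I would take logarithms. Writing $\ell=\ln\lambda$ and using $I(t)\sim A_0t^{p_0}e^{-C_0/t}$,
$$
\ln\big(\lambda I(t)\big)=\ell+\ln A_0+p_0\ln t-\frac{C_0}{t}+o(1),
$$
and then substitute $t=a_\lambda x+b_\lambda$ with $a_\lambda=C_0/\ell^2$ and $b_\lambda=\tfrac{C_0}{\ell}\big(1+\tfrac{p_0\ln\ell-\ln(A_0C_0^{p_0})}{\ell}\big)$. Because $a_\lambda=o(b_\lambda)$, expanding $C_0/t=(C_0/b_\lambda)\big(1-a_\lambda x/b_\lambda+\cdots\big)$ yields $C_0/t=\ell-p_0\ln\ell+\ln(A_0C_0^{p_0})-x+o(1)$, while $t\sim C_0/\ell$ gives $p_0\ln t=p_0\ln C_0-p_0\ln\ell+o(1)$. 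Adding the contributions, the $\ell$ and $\ln\ell$ terms cancel in pairs, and, using $\ln(A_0C_0^{p_0})=\ln A_0+p_0\ln C_0$, the remaining constants cancel exactly, leaving $\ln(\lambda I(t))\to x$. This is precisely the required limit, so \eqref{eqn:betterablambdas} reproduces the stated limiting distribution.

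Conceptually this is an instance of the equivalence principle \eqref{eq:elem}: a second pair $(a_\lambda',b_\lambda')$ yields the same nondegenerate limit once $a_\lambda'/a_\lambda\to1$ and $(b_\lambda'-b_\lambda)/a_\lambda\to0$. The scale condition is immediate, $a_\lambda'/a_\lambda=\big(\ln(C_0\lambda)/\ln\lambda\big)^2\to1$. The main obstacle — and the reason I would run the direct verification above rather than lean on \eqref{eq:elem} — is the location condition. Here $a_\lambda$ is only of order $(\ln\lambda)^{-2}$, so the location must be pinned down to precision $o((\ln\lambda)^{-2})$, that is, through three orders in $1/\ln\lambda$. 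In particular, the difference between the leading terms $C_0/\ln\lambda$ and $C_0/\ln(C_0\lambda)$ is of size $\sim a_\lambda\ln C_0$, which is exactly the borderline order that survives division by $a_\lambda$; this is a delicate point that must be handled with care. Getting the bookkeeping right — keeping the $\ln\ln\lambda$ correction and the constant $-C_0\ln(A_0C_0^{p_0})/(\ln\lambda)^2$ term so that they cancel the subleading parts of $C_0/t$ and $p_0\ln t$ — is where all the content of the lemma resides, and it is why the explicit pair \eqref{eqn:betterablambdas}, built around a plain $\ln\lambda$, is the natural choice to carry into the remaining proofs.
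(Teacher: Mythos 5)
Your proof is correct, and it takes a genuinely different --- and in fact more careful --- route than the paper. The paper's entire proof of Lemma~\ref{lem:betterablambdas} is the single sentence ``This follows directly from the condition \eqref{eq:elem},'' i.e.\ it asserts that the pair in \eqref{eqn:betterablambdas} and the pair in Theorem~\ref{thm:TIlimitDist} satisfy $a'_\lambda/a_\lambda\to1$ and $(b'_\lambda-b_\lambda)/a_\lambda\to0$. Your instinct to distrust the location condition and run the direct verification instead is well founded: writing $\ell=\ln\lambda$ and $L=\ln(C_0\lambda)=\ell+\ln C_0$, the difference of the leading terms is
\begin{equation*}
\frac{C_0}{\ell}-\frac{C_0}{L}=\frac{C_0\ln C_0}{\ell L},
\qquad\text{so}\qquad
\frac{1}{a_\lambda}\left(\frac{C_0}{\ell}-\frac{C_0}{L}\right)=\frac{L\ln C_0}{\ell}\longrightarrow\ln C_0,
\end{equation*}
which is nonzero unless $C_0=1$. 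So \eqref{eq:elem} does \emph{not} hold between the two pairs, and the paper's one-line justification is insufficient as written. Your direct computation settles which pair is right: with \eqref{eqn:betterablambdas} one gets $\lambda I(a_\lambda x+b_\lambda)\to e^{x}$ exactly (consistent with the large-$N$ normalization of Theorem~\ref{thm:largeNkthFPT} under the coupling $N=\lambda$), whereas the same expansion with the Theorem~\ref{thm:TIlimitDist} pair gives $\ell-L+x+o(1)$ in the exponent, i.e.\ $\lambda I\to e^{x}/C_0$ and a limit $\tfrac{1}{(k-1)!}\Gamma(k,e^{x}/C_0)$. In other words, your argument not only proves the lemma but exposes that the constant in the $b_\lambda$ of Theorem~\ref{thm:TIlimitDist} is off by a term of order $a_\lambda\ln C_0$ (the statement there should either use $\ln\lambda$ throughout or absorb an extra $C_0\ln C_0/(\ln(C_0\lambda))^2$). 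Your reduction via Proposition~\ref{prop:SIequalityInhomo} to the single scalar limit $\lambda I(a_\lambda x+b_\lambda)\to e^{x}$, and the subsequent logarithmic bookkeeping, are all correct; this is the argument the paper should have given.
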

\begin{proof}
    This follows directly from the condition \eqref{eq:elem}.
\end{proof}

\begin{lemma}
    Suppose the limit as $\lambda \rightarrow \infty$ of the expressions
    $$
    S_{TI, k}(a_\lambda x + b_\lambda), \quad \E[\widetilde{T_{k, \lambda}}^m]
    $$
    exist when $\lambda$ is restricted to the natural numbers. The limits exist and do not change when $\lambda$ is allowed to be any positive real number.
    \label{lem:naturalnumbers}
\end{lemma}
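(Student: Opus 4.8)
The plan is to exploit the fact that the $k$th passage time of the TII process is monotone in the immigration rate $\lambda$, so that a real rate $\lambda$ can be sandwiched between the integer rates $n=\lfloor\lambda\rfloor$ and $n+1$. Concretely, for $n\le\lambda\le n+1$ one couples the three processes on a common space by superposition of Poisson point processes: the rate-$\lambda u$ PPP is the rate-$n u$ PPP together with an independent rate-$(\lambda-n)u$ PPP, and likewise the rate-$(n+1)u$ PPP is the rate-$\lambda u$ PPP together with an independent rate-$(n+1-\lambda)u$ PPP. Assigning i.i.d.\ search times $\tau$ to every immigrant, the collection of search times is nested, and since the $k$th order statistic can only decrease when points are added, this coupling gives the pathwise bounds $T_{k,n+1}\le T_{k,\lambda}\le T_{k,n}$. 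In particular $S_{TI,k}^{(n+1)}(t)\le S_{TI,k}^{(\lambda)}(t)\le S_{TI,k}^{(n)}(t)$ for every $t$; the same monotonicity is visible directly from Proposition~\ref{prop:SIequalityInhomo}, since $S_{TI,k}(t)=\Phi(\lambda I(t))$ with $\Phi(\mu):=e^{-\mu}\sum_{i=0}^{k-1}\mu^i/i!$ strictly decreasing and $I(t)\ge0$.

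For the distributional statement I would first record that comparing $\lambda$ with $n=\lfloor\lambda\rfloor$ (or $n+1$) reproduces condition \eqref{eq:elem}: a direct computation in each case gives $a_n/a_\lambda\to1$ and $(b_\lambda-b_n)/a_n\to0$, because $n/\lambda\to1$ forces $(\lambda/n)^{1/p_0}\to1$ in the power case and $(\ln\lambda/\ln n)^2\to1$ together with $\ln\lambda\cdot\ln(n/\lambda)/\ln n\to0$ in the exponential case. Writing $x':=(a_\lambda x+b_\lambda-b_n)/a_n$, so that $a_nx'+b_n=a_\lambda x+b_\lambda$ and $x'\to x$, the monotonicity above yields
$$
S_{TI,k}^{(n+1)}\big(a_{n+1}x''+b_{n+1}\big)\le S_{TI,k}^{(\lambda)}\big(a_\lambda x+b_\lambda\big)\le S_{TI,k}^{(n)}\big(a_nx'+b_n\big),
$$
where $x''\to x$ is defined analogously. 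Since the integer-indexed survival functions $S_{TI,k}^{(n)}(a_n\,\cdot\,+b_n)$ are monotone in their argument and converge pointwise to the continuous limit $F$ of Theorem~\ref{thm:TIlimitDist}, that convergence is locally uniform (Pólya's theorem); combined with $x',x''\to x$ and continuity of $F$, both the upper and lower bounds converge to $F(x)$, and the squeeze gives $\lim_{\lambda\to\infty}S_{TI,k}(a_\lambda x+b_\lambda)=F(x)$ along the reals.

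The moment statement is the part I expect to require the most care, since $x\mapsto x^m$ is not sign-preserving and the normalizations for $\lambda$ and $n$ differ, so one cannot simply sandwich expectations. The plan is instead to establish uniform integrability of $\{\widetilde{T_{k,\lambda}}^{\,m}\}_{\lambda>0}$ over the reals and then invoke convergence in distribution (just proved) to conclude convergence of moments, with limit necessarily equal to the integer-indexed limit by consistency. To get the uniform integrability, I would use the same coupling: on the joint space $T_{k,\lambda}-b_\lambda$ lies between $T_{k,n+1}-b_\lambda$ and $T_{k,n}-b_\lambda$, whence $|T_{k,\lambda}-b_\lambda|\le|T_{k,n}-b_\lambda|+|T_{k,n+1}-b_\lambda|$; dividing by $a_\lambda$ and using $a_n/a_\lambda\to1$, $(b_n-b_\lambda)/a_\lambda\to0$ gives, for all large $\lambda$, a bound of the form $|\widetilde{T_{k,\lambda}}|\le C(|\widetilde{T_{k,n}}|+|\widetilde{T_{k,n+1}}|)+1$ and hence $|\widetilde{T_{k,\lambda}}|^{\,m}\le C'(|\widetilde{T_{k,n}}|^{\,m}+|\widetilde{T_{k,n+1}}|^{\,m}+1)$. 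The dominating family is uniformly integrable: the hypothesis supplies convergence of the integer-indexed moments at every order, so $\sup_n\E[|\widetilde{T_{k,n}}|^{\,2m}]<\infty$ and the $m$th powers are therefore uniformly integrable (this is also exactly the content available through the coupling to the large-$N$ process and Corollary~\ref{coro:largeNUI}). A family pathwise dominated by a uniformly integrable one is uniformly integrable, which closes the argument and shows the moment limit is unchanged when $\lambda$ ranges over the positive reals.
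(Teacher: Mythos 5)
Your proof is correct, and the distributional half is essentially the paper's argument: both use monotonicity of $S_{TI,k}$ in the immigration rate to sandwich real $\lambda$ between $\lfloor\lambda\rfloor$ and $\lceil\lambda\rceil$, and both use the \eqref{eq:elem}-type compatibility of the normalizing sequences to transfer the integer-indexed limit (you make the argument-shift step explicit via $x'\to x$ and P\'olya's theorem, which the paper leaves implicit). Where you genuinely diverge is the moment statement. The paper splits by parity: for odd $m$ it sandwiches $\E[\widetilde{T_{k,\lambda}}^m]$ directly using monotonicity of $x\mapsto x^m$, and for even $m$ it resorts to a bound by $\E[\max(\cdot,\cdot)]$, the identity $\max(c,d)=(c+d+|c-d|)/2$, and Fatou's lemma for the lower bound. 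You instead prove uniform integrability of the real-indexed family $\{\widetilde{T_{k,\lambda}}^{\,m}\}$ by pathwise domination under the three-way superposition coupling, $|\widetilde{T_{k,\lambda}}|^m\le C'(|\widetilde{T_{k,n}}|^m+|\widetilde{T_{k,n+1}}|^m+1)$, and then invoke convergence in distribution plus uniform integrability. This is arguably cleaner and parity-free, and it is the same mechanism the paper itself uses elsewhere (Corollary~\ref{coro:largeNUI}). One small caveat: your route to uniform integrability via $\sup_n\E[|\widetilde{T_{k,n}}|^{2m}]<\infty$ reads the lemma's hypothesis as supplying convergence of integer-indexed moments of \emph{all} orders (you need order $2m$ to control order $m$); if the hypothesis is read as concerning only the fixed $m$, you should instead fall back on the domination by the large-$N$ passage times and Corollary~\ref{coro:largeNUI}, which you already flag as an alternative, so this is a presentational point rather than a gap.
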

\begin{proof}
    By \eqref{eq:elem}, we can replace $(a_{\lambda}, b_{\lambda})$ with $(a_{\lfloor\lambda\rfloor}, b_{\lfloor\lambda\rfloor})$ or $(a_{\lceil\lambda\rceil}, b_{\lceil\lambda\rceil})$ and still have the same limits. Noting that faster immigration leads to faster FPTs and letting $S_{TI, k, \lambda}$ denote the survival probability for immigration rate $\lambda$,
    $$
    \lim_{\lambda\rightarrow\infty} S_{TI, k, \lfloor\lambda\rfloor}(a_{\lfloor\lambda\rfloor}x + b_{\lfloor\lambda\rfloor}) \geq \lim_{\lambda\rightarrow\infty} S_{TI, k, \lambda}(a_{\lfloor\lambda\rfloor}x + b_{\lfloor\lambda\rfloor}) = \lim_{\lambda\rightarrow\infty} S_{TI, k, \lambda}(a_{\lambda}x + b_{\lambda})
    $$
    $$
    \lim_{\lambda\rightarrow\infty} S_{TI, k, \lceil\lambda\rceil}(a_{\lceil\lambda\rceil}x + b_{\lceil\lambda\rceil}) \leq \lim_{\lambda\rightarrow\infty} S_{TI, k, \lambda}(a_{\lceil\lambda\rceil}x + b_{\lceil\lambda\rceil}) = \lim_{\lambda\rightarrow\infty} S_{TI, k, \lambda}(a_{\lambda}x + b_{\lambda})
    $$
    which implies 
    $$
    \lim_{\lambda\rightarrow\infty} S_{TI, k, \lfloor\lambda\rfloor}(a_{\lfloor\lambda\rfloor}x + b_{\lfloor\lambda\rfloor}) = \lim_{\lambda\rightarrow\infty} S_{TI, k, \lceil\lambda\rceil}(a_{\lceil\lambda\rceil}x + b_{\lceil\lambda\rceil}) = \lim_{\lambda\rightarrow\infty} S_{TI, k, \lambda}(a_{\lambda}x + b_{\lambda})
    $$
    On to moments. Let $T_{k, \lambda}$ denote the $k$th passage time of the TII process with immigration rate $\lambda u(t)$ at time $t$. When $m$ is odd,
    $$
        \E[\widetilde{T_{k, \lfloor\lambda\rfloor}}^m] \geq \E\left[\left(\frac{T_{k, \lambda} - b_{\lfloor\lambda\rfloor}}{a_{\lfloor\lambda\rfloor}}\right)^m\right], \quad\quad \E[\widetilde{T_{k, \lceil\lambda\rceil}}^m] \leq \E\left[\left(\frac{T_{k, \lambda} - b_{\lceil\lambda\rceil}}{a_{\lceil\lambda\rceil}}\right)^m\right].
    $$
    Taking limits leads to the desired result. When $m$ is even,
    $$
    \E\left[\left(\frac{T_{k, \lambda} - b_{\lfloor\lambda\rfloor}}{a_{\lfloor\lambda\rfloor}}\right)^m\right] \leq \E\left[\max\left(\widetilde{T_{k, \lfloor\lambda\rfloor}}^m, \left(\frac{T_{k, \lceil\lambda\rceil} - b_{\lfloor\lambda\rfloor}}{a_{\lfloor\lambda\rfloor}}\right)^m\right)\right].
    $$
    Noting that for any two numbers $c$ and $d$, the max of $c$ and $d$ is $(c+d + |c-d|)/2$ and that $\E\left[\left(\frac{T_{k, \lceil\lambda\rceil} - b_{\lfloor\lambda\rfloor}}{a_{\lfloor\lambda\rfloor}}\right)^m\right]$ has the same limit as $\E[\widetilde{T_{k, \lfloor\lambda\rfloor}}^m]$, then taking limits implies
    $$
    \lim_{\lambda\rightarrow \infty}\E[\widetilde{T_{k, \lambda}}^m] = \lim_{\lambda\rightarrow \infty}\E\left[\left(\frac{T_{k, \lambda} - b_{\lfloor\lambda\rfloor}}{a_{\lfloor\lambda\rfloor}}\right)^m\right] \leq \lim_{\lambda\rightarrow \infty}\E[\widetilde{T_{k, \lfloor\lambda\rfloor}}^m].
    $$
    For the lower bound, we apply Fatou's lemma (see Theorem 3.4 in \cite{billingsley2013}). Since the lower bound is the same as the upper bound, we are done.    
\end{proof}

\subsection{Proving Theorem \ref{thm:TIlimitDist}}
The plan is to compare the immigration process with the large $N$ process. To do so, we use a coupling argument outlined in Figure \ref{fig:coupling}. 

\begin{figure}[ht!]
    \centering
    \includegraphics[width=0.7\linewidth]{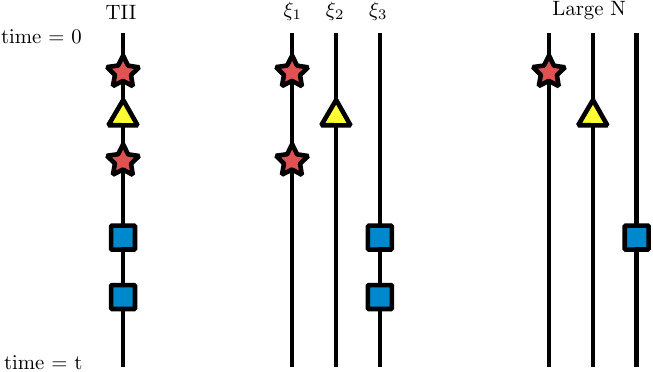}
    \caption{A diagram for the proof of Theorem \ref{thm:TIlimitDist} when $\lambda = 3$. Each vertical line denotes a timeline for a process and the markers represent times a searcher found a target. By Poisson thinning, one can couple the hitting times of TII with rate $\lambda u(t)$ with three TIIs with rate $u(t)$ denoted by $\xi_1, \xi_2,$ and $\xi_3$. If we only record the FPT for each $\xi_i$, we get three iid FPTs, which can be modeled with a large $N$ process with $N=3$ and an adjust survival probability for each searcher. The fastest searcher is going to be the same by construction for TII and large $N$, so $T_1 =  \mathcal{T}_{1}$. For $k\geq 2$ it is not necessarily true that $T_k =  \mathcal{T}_{k}$; one can observe in the diagram that $T_3 \neq  \mathcal{T}_{3}$. However, the probability they are different approaches $0$ as $\lambda \rightarrow \infty$.}
    \label{fig:coupling}
\end{figure}

By Lemmas \ref{lem:betterablambdas} and \ref{lem:naturalnumbers}, we only need to consider integer values of $\lambda$. Let $\xi_i$ be iid TII processes with immigration rate $u(t)$ and each searcher has survival probability $S(t)$. By Poisson superposition, the union of the passage times of $\{\xi_i\}_{i=1}^\lambda$ gives the hitting times of the TII process with immigration rate $\lambda u(t)$. Mathematically, this means that, if we let $\xi_i^{(j)}$ denote the $j$th FPT of the process $\xi_i$ and let $\kmin$ denote the $k$th minimum of a set,
$$
T_{k} = \kmin\bigg(\bigcup_{i=1}^\lambda \bigcup_{j=1}^\infty \{\xi_i^{(j)}\}\bigg).
$$
Now to couple $\{\xi_i\}_{i=1}^\lambda$ to a large $N$ searcher process. For each $\xi_i$, remove all passage times other than the first passage time, $\xi_i^{(1)}$. By properties of time inhomogeneous PPPs, $\xi_i^{(1)}$ has survival probability
$$
P(\xi_i^{(1)}>t) = \exp\left[-\int_0^t u(t-s)(1-S(s)) ds\right] = S_*(t).
$$
This implies that the $k$th smallest $\xi_i^{(1)}$ is the $k$th smallest of $\lambda$ many initial iid searchers, each with survival probability $S_*(t)$, and therefore
$$
 \mathcal{T}_{k} = \kmin\bigg(\bigcup_{i=1}^\lambda \{\xi_i^{(1)}\}\bigg).
$$
Now that we have coupled the TII process with a large $N$ search process through $\{\xi_i\}_{i=1}^\lambda$, we examine the relation between $T_k$ and $\mathcal{T}_{k}$ in two separate cases: $k=1$ and $k\geq 2$. In the case $k=1$, then $\mathcal{T}_{1} = T_{1}$, and therefore $\widetilde{T_{1}}$ converges in distribution to whatever $\widetilde{\mathcal{T}_{1}}$ converges to. Noting that 
\begin{align*}
        1-S_*(t) &= 1-\exp\left[-\int_0^t u(t-s) (1-S(s)) ds \right] \\
        &= 1-\exp\left[-I(t) \right] \\
        &\sim I(t)
\end{align*}
 and we are given $I(t) \sim A_0t^{p_0}$ or $I(t) \sim A_0t^{p_0}e^{-C_0/t}$, applying Theorem \ref{thm:largeNkthFPT} finishes the proof for $k=1$.

In the case of $k\geq 2$, $\mathcal{T}_{k} = T_k$ when the first $k$ arrivals come from different $\xi$s, and $\mathcal{T}_{k} \geq T_k$ otherwise. Since each arrival has equal probability of coming from any of the $\lambda$ immigration processes, 
$$
P(\mathcal{T}_{k} = T_k) = \prod_{j=0}^{k-1}\frac{\lambda-j}{\lambda} \rightarrow 1 \quad\text{as}\quad\lambda \rightarrow \infty.
$$
This implies $\widetilde{T_{k}} - \widetilde{\mathcal{T}_{k}} \rightarrow 0$ in probability, and therefore $\widetilde{T_{k}}$ converges in distribution to whatever $\widetilde{\mathcal{T}_{k}}$ converges to. Using the small time behavior of $1-S_*(t)$ with Theorem \ref{thm:largeNkthFPT} gives the desired result.

\subsection{Showing equation \eqref{eqn:shortTimeIt}}
\label{app:TInumerics}
We first note two facts we will use repeatedly throughout the proof. The first, which follows from a change of variables, $u=C/s$, indicates
$$
\int_0^t As^pe^{-C/s}ds = AC^{p+1}\Gamma(-p-1, C/t).
$$
The second is that the upper incomplete Gamma function has asymptotic behavior
$$
\Gamma(r,z)\sim z^{r-1}e^{-z}\quad\text{as }z\to\infty.
$$
Now back to the problem. By definition
$$
I(t) \sim \int_0^t A_\alpha(t-s)^n As^pe^{-C/t}ds.
$$
Using integration by parts with
$$
u=\alpha_n(t-s)^n, \quad du=-n\alpha_n(t-s)^{n-1},\quad  dv = As^pe^{-C/t}ds, \quad v = AC^{p+1}\Gamma(-p-1, C/t)
$$
it follows that
\begin{align*}
    I(t) &\sim \left[(t-s)^n\alpha_nAC^{p+1}\Gamma(-p-1, C/s)\right]_{s=0}^{s=t} + n \int_0^t \alpha_nAC^{p+1}\Gamma(-p-1, C/s)(t-s)^{n-1}ds \\
    &=n \int_0^t \alpha_nAC^{p+1}\Gamma(-p-1, C/s)(t-s)^{n-1}ds.
\end{align*}
Using the asymptotic behavior of $\Gamma(\cdot, \cdot)$, 
\begin{align*}
    I(t) &\sim n\int_0^t \alpha_nAC^{p+1}\left(\frac{C}{s}\right)^{-p-2}e^{-C/s}(t-s)^{n-1}ds \\
    &=\frac{n}{C}\int_0^t \alpha_nAs^{p+2}e^{-C/s}(t-s)^{n-1} ds.
\end{align*}
This yields a recursive relationship that lets us decrease the degree of $(t-s)$. Repeating this recursion until the degree is $0$, we then find
$$
I(t) \sim \frac{n!}{C^n}\int_0^t \alpha_nAs^{p+2n}e^{-C/s} ds.
$$
Lastly, integrating and using the asymptotic behavior of $\Gamma(\cdot, \cdot)$ yields the desired conclusion
$$
I(t) \sim \frac{A\alpha_nn!}{C^{n+1}}t^{p+2n+2}e^{-C/t}.
$$

\subsection{Proving Theorem \ref{thm:TIlimitMom}}
By Lemmas \ref{lem:betterablambdas} and \ref{lem:naturalnumbers}, we only need to consider integer $\lambda$s. As we showed in the proof of Theorem \ref{thm:TIlimitDist}, $\mathcal{T}_{1} = T_1$ so applying Theorem \ref{thm:largeNFPTMom} with the small time behavior of $1-S_*(t)$ immediately proves the case $k=1$.

We now turn our attention to $k\geq 2$. The outline of the remainder of the proof is that convergence in distribution and uniform integrability imply convergence of mean (see, for example, Theorem~3.5 in \cite{billingsley2013}). As such, it suffices to show that for any positive integer $m$, the set of random variables $\{\widetilde{T_{k}}^m\}_{\lambda>\lambda^*}$ is uniformly integrable.

Recall that the definition of uniform integrability is
$$
\lim_{K\rightarrow \infty} \sup_{\lambda>\lambda^*} \E[|\widetilde{T_{k}}|^m \mathbbm{1}_{\{|\widetilde{T_{k}}|^m > K^m\}}] = 0.
$$
It then suffices to show that
$$
\lim_{K\rightarrow \infty} \sup_{\lambda>\lambda^*} \E[\widetilde{T_{k}}^m \mathbbm{1}_{\{\widetilde{T_{k}} > K\}}] = 0, \qquad \lim_{K\rightarrow \infty} \sup_{\lambda>\lambda^*} \E[\left(-\widetilde{T_{k}}\right)^m \mathbbm{1}_{\{\widetilde{T_{k}} < -K\}}] = 0,
$$
or equivalently
\begin{equation}
    \lim_{K\rightarrow \infty} \sup_{\lambda>\lambda^*} \E\left[\left(\frac{T_{k}-b_{\lambda}}{a_\lambda}\right)^m \mathbbm{1}_{T_{k} > b_\lambda+K a_\lambda\}}\right] = 0,
    \label{eqn:goal1}
\end{equation}
\begin{equation}
    \lim_{K\rightarrow \infty} \sup_{\lambda>\lambda^*} \E\left[\left(\frac{b_{\lambda}-T_{k}}{a_\lambda}\right)^m \mathbbm{1}_{T_{k} < b_\lambda-K a_\lambda\}}\right] = 0.
    \label{eqn:goal2}
\end{equation}

We start with \eqref{eqn:goal2}, the simpler equation to prove. Since $T_{k} \geq T_{1} = \mathcal{T}_{1}$, it follows that
$$
\lim_{K\rightarrow \infty} \sup_{\lambda>\lambda^*} \E\left[\left(\frac{b_{\lambda}-T_{k}}{a_\lambda}\right)^m \mathbbm{1}_{T_{k} < b_\lambda-K a_\lambda\}}\right] \leq \lim_{K\rightarrow \infty} \sup_{\lambda>\lambda^*} \E\left[\left(\frac{b_{\lambda}-\mathcal{T}_{1}}{a_\lambda}\right)^m \mathbbm{1}_{\mathcal{T}_{1} < b_\lambda-K a_\lambda\}}\right].
$$
Since Corollary \ref{coro:largeNUI} from large $N$ results tells us $\{(\mathcal{T}_{k} - b_{\lambda})/a_\lambda\}_{\lambda}$ is uniformly integrable, the righthand side is 0, which is what was desired.

Next, we show \eqref{eqn:goal1}. By the coupling argument given in the proof of Theorem \ref{thm:TIlimitMom}, we find $T_{k} \leq \mathcal{T}_{k}$. Substituting into the lefthand side of \eqref{eqn:goal1}, we find
$$
\lim_{K\rightarrow \infty} \sup_{\lambda>\lambda^*} \E\left[\left(\frac{T_{k}-b_{\lambda}}{a_\lambda}\right)^m \mathbbm{1}_{T_{k} > b_\lambda+K a_\lambda\}}\right] \leq \lim_{K\rightarrow \infty} \sup_{\lambda>\lambda^*} \E\left[\left(\frac{\mathcal{T}_{k}-b_{\lambda}}{a_\lambda}\right)^m \mathbbm{1}_{\mathcal{T}_{k} > b_\lambda+K a_\lambda\}}\right].
$$
Once again, since Corollary \ref{coro:largeNUI} from large $N$ results tells us $\{(\mathcal{T}_{k} - b_{\lambda})/a_\lambda\}_{\lambda}$ is uniformly integrable, the righthand side is 0, which is what was desired.

\section{Proofs of theorems for YI}
\subsection{Prove \eqref{eqn:YIindividual}}
Using the interarrival time formula \eqref{eqn:YuleInterarrival} given in Section \ref{sec:YIimmigrationrates}, the density for $\sigma_{1:n}$ being the immigration times in time interval $(0, t)$ is
\begin{align*}
    f(\sigma_{1:n})&=e^{-\lambda(n+1)(t-\sigma_n)}\prod_{k=1}^n (k\lambda)e^{-(k\lambda)(\sigma_k-\sigma_{k-1})} \\
    &= \lambda^n n! \exp\left(\lambda\sum_{i=1}^n \sigma_i\right)e^{-\lambda(n+1)t}
\end{align*}
where we define $\sigma_0 = 0$.

Further recall from Section \ref{sec:YIimmigrationrates} that the probability of exactly $n$ births (or population size $n+1$) at time $t$ is
$$
\left(1-e^{-\lambda t}\right)^n e^{-\lambda t}.
$$
Conditioning on the number of births, we get the new density
$$
\frac{f(\sigma_{1:n})}{\left(1-e^{-\lambda t}\right)^n e^{-\lambda t}} = n! \prod_{k=1}^n \frac{\lambda e^{\lambda \sigma_i}}{e^{\lambda t}-1}.
$$
The $n\!$ can be seen as a correction factor for rearranging $\sigma_{1:n}$ in other random orders. In other words, when conditioned on the number of births, we can generate $\sigma_{1:n}$ by picking $\sigma_i'$ iid from a truncated exponential random variable with rate $\lambda$, which has distribution,
$$
P(\sigma_i' \leq a) = \frac{e^{\lambda a}-1}{e^{\lambda t}-1},
$$
then reordering $\sigma_{1:n}'$ in ascending order to get $\sigma_{1:n}$.

\subsection{Prove \eqref{eqn:binomGeo}}
One can prove this with directly, but we give a probabilistic proof to avoid some algebra. Noting that one can simulate the number of trials $\geo(p_1)$ by
\begin{enumerate}
    \item Set count = 0
    \item Loop and do exactly one of the following
    \begin{enumerate}
        \item With probability $p_1$, return count.
        \item With probability $1-p_1$, count = count + 1.
    \end{enumerate}
\end{enumerate}
one can simulate $\binom\left(\geo(p_1), p_2\right)$ by adding an additional coin flip with probability $p_2$ to decide if count increases by $1$. 
\begin{enumerate}
    \item Set count = 0
    \item Loop and do exactly one of the following
    \begin{enumerate}
        \item With probability $p_1$, return count.
        \item With probability $(1-p_1)p_2$, set count = count + 1
        \item With remaining probability do nothing.
    \end{enumerate}
\end{enumerate}
If we then condition on either returning count or increasing count by 1 each loop, the algorithm becomes
\begin{enumerate}
    \item Set count = 0
    \item Loop and do exactly one of the following
    \begin{enumerate}
        \item With probability $p_1/(p_1 + (1-p_1)p_2)$, return count.
        \item With probability $(1-p_1)p_2/(p_1 + (1-p_1)p_2)$, set count = count + 1
    \end{enumerate}
\end{enumerate}
This is just the algorithm for simulating $\geo(p_1/(p_1 + (1-p_1)p_2))$, concluding the proof.

\subsection{Proving Theorem \ref{thm:YIlimitDist}}
We first show a lemma that will be of use in proving convergence in distribution and convergence of moments.
\begin{lemma}
    $$\lim_{\lambda \rightarrow \infty} \lambda e^{\lambda  b_\lambda}\int_0^{a_\lambda x + b_\lambda} (1-S(s))e^{-\lambda s} ds = 1$$
    \label{lem:YIpointwise}
\end{lemma}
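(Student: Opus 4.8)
The plan is to evaluate the integral asymptotically in each of the two short-time regimes for $1-S$ prescribed in Theorem \ref{thm:YIlimitDist}, exploiting the common observation that the upper limit $a_\lambda x + b_\lambda$ tends to $0$ as $\lambda\to\infty$. Indeed, $a_\lambda = 1/\lambda$ while $\lambda b_\lambda$ grows only like $p\ln\lambda$ in the power case and like $2\sqrt{C\lambda}$ in the exponential case, so $a_\lambda x + b_\lambda\to0$ and the integration is confined to a shrinking neighborhood of the origin, exactly where the stated asymptotics for $1-S(s)$ hold.

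In the case $1-S(t)\sim At^p$, I would first replace $1-S(s)$ by $As^p$ and substitute $u=\lambda s$ to rewrite the quantity as $A\lambda^{-p}e^{\lambda b_\lambda}\int_0^{\lambda(a_\lambda x + b_\lambda)} u^p e^{-u}\,\dd u$. The prefactor $A\lambda^{-p}e^{\lambda b_\lambda}$ equals $1/\Gamma(p+1)$ by the choice of $b_\lambda$, while the new upper limit $\lambda(a_\lambda x + b_\lambda)=x+\lambda b_\lambda\to\infty$, so the integral converges to $\int_0^\infty u^p e^{-u}\,\dd u=\Gamma(p+1)$. The product therefore tends to $1$.

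In the case $1-S(t)\sim At^pe^{-C/t}$, the integrand behaves like $As^p e^{-\phi(s)}$ with $\phi(s)=C/s+\lambda s$, and I would apply Laplace's method. The phase $\phi$ has a unique interior minimum at $s^\star=\sqrt{C/\lambda}$, with $\phi(s^\star)=2\sqrt{C\lambda}$ and $\phi''(s^\star)=2\lambda^{3/2}/\sqrt{C}$. Since $b_\lambda\sim 2\sqrt{C/\lambda}=2s^\star$, the saddle $s^\star$ lies strictly inside $[0,a_\lambda x + b_\lambda]$, and its Gaussian width $1/\sqrt{\phi''(s^\star)}\sim\lambda^{-3/4}$ is negligible compared with the distance $\sim\lambda^{-1/2}$ to either endpoint, so the limits may be sent to $0$ and $\infty$ with vanishing error. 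The Laplace estimate gives $\int\sim A(s^\star)^p e^{-\phi(s^\star)}\sqrt{2\pi/\phi''(s^\star)}$, and multiplying by $\lambda e^{\lambda b_\lambda}$ with the stated $b_\lambda$ cancels every factor of $A$, $C$, $\lambda$, and $e^{2\sqrt{C\lambda}}$, leaving $1$; I would carry out this bookkeeping cancellation explicitly.

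The main obstacle is making these two evaluations rigorous rather than formal. In both cases one writes $1-S(s)=As^p(1+\epsilon(s))$, respectively $As^pe^{-C/s}(1+\epsilon(s))$, with $\epsilon(s)\to0$ as $s\to0^+$, and the replacement must be controlled uniformly. Because the integration domain collapses to the origin, $\sup_{s\in[0,a_\lambda x + b_\lambda]}|\epsilon(s)|\to0$, so the multiplicative error is $1+o(1)$ and factors cleanly out of the limit. For the exponential case I would additionally justify extending the integration limits and bounding the contribution away from the saddle, using a quadratic lower bound on $\phi(s)-\phi(s^\star)$ near $s^\star$ together with the monotonicity of $\phi$ on each side of $s^\star$; this tail control is the technically delicate part of the argument.
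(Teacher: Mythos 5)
Your proposal is correct and follows essentially the same route as the paper: in the power-law case the identical substitution $u=\lambda s$ reducing the integral to $\Gamma(p+1)$ against the prefactor $A\lambda^{-p}e^{\lambda b_\lambda}=1/\Gamma(p+1)$, and in the exponential case Laplace's method at the interior minimum of $C/s+\lambda s$ at $s^\star=\sqrt{C/\lambda}$ (the paper merely rescales by $s=u\sqrt{C/\lambda}$ first so that the phase becomes $\Lambda(\sqrt{u}-1/\sqrt{u})^2$ with minimum at $u=1$, which is a cosmetic difference). Your bookkeeping checks out, and your uniform control of the relative error $\epsilon(s)$ on the collapsing domain matches the paper's $(1\pm\varepsilon)$ sandwich.
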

\begin{proof}
    We consider the two cases $1-S(t)\sim At^p$ and $1-S(t)\sim At^pe^{-C/t}$ separately. When $1-S(t)\sim At^p$, for sufficiently large $\lambda$, $a_\lambda x + b_\lambda$ is close to $0$, which implies
    $$
    \lambda e^{\lambda  b_\lambda}\int_0^{a_\lambda x + b_\lambda} (1-\varepsilon)As^pe^{-\lambda s} ds \leq \lambda e^{\lambda  b_\lambda}\int_0^{a_\lambda x + b_\lambda} (1-S(s))e^{-\lambda s} ds \leq \lambda e^{\lambda  b_\lambda}\int_0^{a_\lambda x + b_\lambda} (1-\varepsilon)As^pe^{-\lambda s} ds
    $$
    As such, it suffices to show
    $$\lim_{\lambda \rightarrow \infty} \lambda e^{\lambda  b_\lambda}\int_0^{a_\lambda x + b_\lambda} As^pe^{-\lambda s} ds = 1$$
    Recalling the gamma function is $\Gamma(s) = \int_0^\infty t^{s-1}e^{-t} dt$, we note
    \begin{align*}
        \lim_{\lambda \rightarrow \infty} \lambda e^{\lambda  b_\lambda}\int_0^{a_\lambda x + b_\lambda} As^pe^{-\lambda s} ds &= \lim_{\lambda \rightarrow \infty} \lambda e^{\lambda  b_\lambda}\int_0^{\lambda (a_\lambda x + b_\lambda)} A\left(\frac{u}{\lambda}\right)^pe^{-u} du\\
        &=\lim_{\lambda \rightarrow \infty} A\lambda^{-p}e^{\lambda b_\lambda} \int_0^{\lambda (a_\lambda x + b_\lambda)} u^pe^{-u} du \\
        &=\lim_{\lambda \rightarrow \infty} A\lambda^{-p}e^{\lambda b_\lambda} \Gamma(p+1) \\
        &=1.
    \end{align*}

    Next, we examine the case $1-S(t)\sim At^pe^{-C/t}$. Similar to the previous case, it suffices to show
    $$
    \lim_{\lambda \rightarrow \infty} \lambda e^{\lambda  b_\lambda}\int_0^{a_\lambda x + b_\lambda} As^pe^{-C/s}e^{-\lambda s} ds = 1.
    $$
    Taking the substitution $s = u\sqrt{C/\lambda}$, we simplify to
    $$
    \sqrt{C\lambda}e^{\lambda b_\lambda}\int_{0}^{(a_\lambda x+b_\lambda)\sqrt{\lambda/C}}  A\left(\sqrt{\frac{C}{\lambda}}u\right)^pe^{-\sqrt{C\lambda}\left(u+\frac{1}{u}\right)} du.
    $$
    Plugging in the definition of $a_\lambda$ and $b_\lambda$ and simplifying, we find
    $$
    \sqrt{\frac{\Lambda}{\pi}}\int_0^{2+\frac{x+(p-\frac{1}{2})\ln(\Lambda)-\ln(AC^p\sqrt{\pi})}{\Lambda}} u^pe^{-\Lambda \left(\sqrt{u} - \frac{1}{\sqrt{u}}\right)^2} du
    $$
    where $\Lambda := \sqrt{C\lambda}$. 

    Note that $(\sqrt{u} - \frac{1}{\sqrt{u}})^2$ achieves its unique global minimum $0$ at $u=1$, which for sufficiently large $\Lambda$, is in the boundaries of integration. As such, it follows from Laplace's Method (see 6.4.19c in \cite{bender2013}) that the integral has leading order behavior $\sqrt{\pi/\Lambda}$, and therefore
    $$
    \sqrt{\frac{\Lambda}{\pi}}\int_0^{2+\frac{x+(p-\frac{1}{2})\ln(\Lambda)-\ln(AC^p\sqrt{\pi})}{\Lambda}} u^pe^{-\Lambda \left(\sqrt{u} - \frac{1}{\sqrt{u}}\right)^2} du \sim 1\quad\text{as $\Lambda \rightarrow \infty$}.
    $$
\end{proof}

Now to prove Theorem~\ref{thm:YIlimitDist}. Note that
\begin{align*}
    \lim_{\lambda \rightarrow \infty}S_{YI, k}(a_\lambda x + b_\lambda) &= \lim_{\lambda \rightarrow \infty} S(a_\lambda x + b_\lambda)[1-(1-p(a_\lambda x + b_\lambda))^k]\\
    &\phantom{=\lim_{\lambda \rightarrow \infty}} + (1-S(a_\lambda x + b_\lambda))[1-(1-p(a_\lambda x + b_\lambda))^{k-1}] \\
    &= \lim_{\lambda \rightarrow \infty} [1-(1-p(a_\lambda x + b_\lambda))^k]\\
    &= \lim_{\lambda \rightarrow \infty} \left[1-\left(1-\frac{1}{1 + \lambda e^{\lambda (a_\lambda x + b_\lambda)}\int_0^{a_\lambda x + b_\lambda} (1-S(s))e^{-\lambda s} ds}\right)^k\right]\\
    &= \lim_{\lambda \rightarrow \infty} \left[1-\left(1-\frac{1}{1 + e^x\lambda e^{\lambda b_\lambda}\int_0^{a_\lambda x + b_\lambda} (1-S(s))e^{-\lambda s} ds}\right)^k\right]\\
    &= 1-\left(1-\frac{1}{1 + e^x}\right)^k
\end{align*}
where the first equality follows from Proposition \ref{prop:SYIexact}, the second from $a_\lambda$ and $b_\lambda$ converging to $0$, and therefore $S(a_\lambda x + b_\lambda) \rightarrow 1$, the third again from Proposition \ref{prop:SYIexact}, the fourth from $a_\lambda = 1/\lambda$, and the last equality from Lemma \ref{lem:YIpointwise}.

\subsection{Proving theorem \ref{thm:YImoments}}
Since we have convergence in distribution, it suffices to show uniform integrability of $\{\widetilde{T_{k}}^m\}_{\lambda > \lambda^*}$ (see, for example, Theorem~3.5 in \cite{billingsley2013}). $\lambda^*$ may be chosen to be arbitrarily large as necessary.

Recall that the definition of uniform integrability is
$$
\lim_{K\rightarrow \infty} \sup_{\lambda>\lambda^*} \E[|\widetilde{T_k}|^m \mathbbm{1}_{\{|\widetilde{T_k}|^m > K^m\}}] = 0.
$$
First, we bound the expected value for some $K>0$. Noting that $|\widetilde{T_k}|^m \mathbbm{1}_{\{|\widetilde{T_k}|^m > K^m\}}$ is a nonnegative random variable, its $m$th moment is given by
\begin{align*}
    \E[|\widetilde{T_1}|^m \mathbbm{1}_{\{|\widetilde{T_k}|^m > K^m\}}] &= \int_0^{\infty} mx^{m-1}P(|\widetilde{T_k}| \mathbbm{1}_{\{|\widetilde{T_k}|^m > K^m\}} > x) dx \\   
    &= \int_K^{\infty} mx^{m-1}P(|\widetilde{T_k}| > x) dx \\ 
    &= \int_K^{\infty} mx^{m-1}(P(\widetilde{T_k} > x) + 1 -P(\widetilde{T_k} > -x)) dx.
\end{align*}
Noting that
\begin{align*}
    P(\widetilde{T_k} > x)
    &= P(T_k > a_{\lambda} x + b_{\lambda})
    = S_{YI, k}(a_{\lambda} x + b_{\lambda}),\\
    P(\widetilde{T_k} > -x)
    &= P(T_k > -a_{\lambda} x + b_{\lambda})
    = S_{YI, k}(-a_{\lambda} x + b_{\lambda}),
\end{align*}
we can decompose $\E[|\widetilde{T_k}|^m \mathbbm{1}_{\{|\widetilde{T_k}|^m > K^m\}}]$ into the sum of the following two terms,
\begin{align}
    &\int_K^{\infty} mx^{m-1} S_{YI, k}(a_{\lambda} x + b_{\lambda}) dx 
    \label{eqn:YIlimitingproofint1},\\
    &\int_K^{\infty} mx^{m-1} (1-S_{YI, k}(-a_{\lambda} x + b_{\lambda})) dx. 
    \label{eqn:YIlimitingproofint2}
\end{align}
We first tackle \eqref{eqn:YIlimitingproofint1}. 
\begin{lemma}
    For sufficiently large $\lambda^*$
    $$
    \lim_{K\rightarrow \infty} \sup_{\lambda > \lambda^*} \int_K^{\infty} mx^{m-1} S_{YI, k}(a_{\lambda} x + b_{\lambda}) dx =0.
    $$
    \label{lem:YIlimitingproofint1}
\end{lemma}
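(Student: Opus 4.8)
The plan is to dominate the integrand $m x^{m-1} S_{YI,k}(a_\lambda x + b_\lambda)$ on $[K,\infty)$ by a single $\lambda$-independent integrable function, so that the supremum over $\lambda > \lambda^*$ is controlled by one tail integral that vanishes as $K \to \infty$.

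First I would reduce $S_{YI,k}$ to the success probability $p$. By Proposition \ref{prop:SYIexact} together with Bernoulli's inequality $(1-p)^j \ge 1 - jp$ for $p \in [0,1]$ and integer $j \ge 0$, both bracketed terms obey $1-(1-p)^k \le kp$ and $1-(1-p)^{k-1} \le (k-1)p \le kp$. Since $S(t)$ and $1-S(t)$ are nonnegative and sum to $1$, this yields the clean bound
$$
S_{YI,k}(t) \le k\,p(t), \qquad p(t) = \frac{1}{1 + \lambda e^{\lambda t}\int_0^t (1-S(s))e^{-\lambda s}\,ds}.
$$

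Next, and this is the crux, I would establish a uniform-in-$\lambda$ exponential bound $p(a_\lambda x + b_\lambda) \le 2 e^{-x}$ valid for all $x \ge 0$ and all $\lambda > \lambda^*$. Writing $t = a_\lambda x + b_\lambda = x/\lambda + b_\lambda$ and using $a_\lambda = 1/\lambda$, the exponential factor splits as $\lambda e^{\lambda t} = e^x \cdot \lambda e^{\lambda b_\lambda}$. Because $\int_0^t (1-S(s))e^{-\lambda s}\,ds$ is nondecreasing in $t$ and $t \ge b_\lambda$ for $x \ge 0$, the denominator of $p$ is bounded below by
$$
1 + e^x \Big(\lambda e^{\lambda b_\lambda}\int_0^{b_\lambda}(1-S(s))e^{-\lambda s}\,ds\Big).
$$
By Lemma \ref{lem:YIpointwise} applied at $x=0$, the bracketed quantity converges to $1$ as $\lambda \to \infty$, hence exceeds $1/2$ once $\lambda > \lambda^*$. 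The denominator is therefore at least $1 + e^x/2 \ge e^x/2$, giving $p(a_\lambda x + b_\lambda) \le 2 e^{-x}$ uniformly.

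Combining the two bounds gives $S_{YI,k}(a_\lambda x + b_\lambda) \le 2k\, e^{-x}$ for every $\lambda > \lambda^*$, so
$$
\sup_{\lambda > \lambda^*}\int_K^\infty m x^{m-1} S_{YI,k}(a_\lambda x + b_\lambda)\,dx \le 2k \int_K^\infty m x^{m-1} e^{-x}\,dx,
$$
and the right-hand side tends to $0$ as $K \to \infty$ since $x^{m-1}e^{-x}$ is integrable at infinity. I expect the main obstacle to be the uniformity in the second step: the pointwise Lemma \ref{lem:YIpointwise} only controls the integral for fixed $x$, and the essential trick is that monotonicity of $\int_0^t$ lets one factor out the entire $e^x$ growth and reduce the whole family of estimates to the single $x=0$ limit.
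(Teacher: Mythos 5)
Your proof is correct and follows essentially the same route as the paper's: bound $S_{YI,k}\le k\,p$ (the paper gets there via $S_{YI,k}\le 1-(1-p)^k$ and the geometric-sum identity rather than Bernoulli, a cosmetic difference), factor $\lambda e^{\lambda(a_\lambda x+b_\lambda)}=e^x\lambda e^{\lambda b_\lambda}$, use monotonicity of $\int_0^t$ to reduce to the $x=0$ case of Lemma \ref{lem:YIpointwise}, and finish with integrability of $x^{m-1}e^{-x}$. No gaps.
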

\begin{proof}
    Since $1-(1-p(t))^k \geq 1-(1-p(t))^{k-1}$ for $k\geq 1$ and $S_{YI, k}(t)$ is a biased average between the two values, it follows that $S_{YI, k}(t) \leq 1-(1-p(t))^k$ and therefore
    \begin{align*}
        \int_K^{\infty} mx^{m-1} S_{YI, k}(a_{\lambda} x + b_{\lambda}) dx &\leq \int_K^{\infty} mx^{m-1} [1-(1-p(a_{\lambda} x + b_{\lambda}))^k] dx \\
        &= \int_K^{\infty} mx^{m-1} \left[ p(a_{\lambda} x + b_{\lambda})\sum_{i = 0}^{k-1}(1-p(a_{\lambda} x + b_{\lambda}))^{i} \right] dx \\
        &\leq k \int_K^{\infty} mx^{m-1} p(a_{\lambda} x + b_{\lambda}) dx \\
        &= k \int_K^{\infty} mx^{m-1} \frac{1}{1 + \lambda e^{\lambda (a_{\lambda} x + b_{\lambda})}\int_0^{a_{\lambda} x + b_{\lambda}} (1-S(s))e^{-\lambda s} ds} dx \\
        &= k \int_K^{\infty} mx^{m-1} \frac{1}{1 + e^x \lambda e^{\lambda b_{\lambda}}\int_0^{a_{\lambda} x + b_{\lambda}} (1-S(s))e^{-\lambda s} ds} dx \\
        &\leq k \int_K^{\infty} mx^{m-1} \frac{1}{1 + e^x \left( \lambda e^{\lambda b_{\lambda}}\int_0^{ b_{\lambda}} (1-S(s))e^{-\lambda s} ds \right)} dx.
    \end{align*}
    Applying Lemma \ref{lem:YIpointwise} with $x=0$, there exists a $\lambda^*$ such that if $\lambda>\lambda^*$, then $\lambda e^{\lambda b_{\lambda}}\int_0^{ b_{\lambda}} (1-S(s))e^{-\lambda s} ds >1-\varepsilon$ and therefore
    \begin{align*}
        \int_K^{\infty} mx^{m-1} S_{YI, k}(a_{\lambda} x + b_{\lambda}) dx &\leq k \int_K^{\infty} mx^{m-1} \frac{1}{1 + e^x (1-\varepsilon)} dx \\
        &\leq k \int_K^{\infty} mx^{m-1} e^{-x}\frac{1}{ 1-\varepsilon} dx.
    \end{align*}
    Since this upper bound is finite for $K=0$ and does not depend on $\lambda$, it follows that the upper bound approaches $0$ as $K\rightarrow \infty$.
\end{proof}

To finish the proof, we tackle \eqref{eqn:YIlimitingproofint2}.
\begin{lemma}
    For sufficiently large $\lambda^*$
    $$
    \lim_{K\rightarrow \infty} \sup_{\lambda > \lambda^*} \int_K^{\infty} mx^{m-1} (1-S_{YI, k}(b_{\lambda}-a_{\lambda} x)) dx =0
    $$
    \label{lem:YIlimitingproofint2}
\end{lemma}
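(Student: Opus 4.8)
The plan is to bound the integrand in \eqref{eqn:YIlimitingproofint2} uniformly in $\lambda$ by an integrable function of $x$ and then conclude by dominated convergence in $K$. The quantity $1-S_{YI,k}(b_\lambda - a_\lambda x)$ measures the probability that at least $k$ searchers have found the target by a time $b_\lambda - a_\lambda x$ which lies \emph{below} the median scale $b_\lambda$, so for large $x$ this should be exponentially small. By Proposition~\ref{prop:SYIexact}, writing $t = b_\lambda - a_\lambda x$, we have
\begin{align*}
    1-S_{YI,k}(t)
    &= S(t)(1-p(t))^k + (1-S(t))(1-p(t))^{k-1}\\
    &\le (1-p(t))^{k-1}.
\end{align*}
Since $S(t)\to 1$ and $p(t)\to 0$ on this scale, $(1-p(t))^{k-1}\le 1$, so the crude bound alone is not enough; the decay in $x$ must come from showing $p(t)\to 1$, i.e.\ that $(1-p(t))^{k-1}$ is small, when $t$ is far below $b_\lambda$.

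First I would analyze $p(t)$ for $t = b_\lambda - a_\lambda x = b_\lambda - x/\lambda$. From Proposition~\ref{prop:SYIexact}, $p(t) = (1 + \lambda e^{\lambda t}\int_0^t (1-S(s))e^{-\lambda s}\,ds)^{-1}$, and factoring out $e^{\lambda b_\lambda}$ as in the proof of Theorem~\ref{thm:YIlimitDist} gives the denominator $1 + e^{-x}\,\lambda e^{\lambda b_\lambda}\int_0^{b_\lambda - a_\lambda x}(1-S(s))e^{-\lambda s}\,ds$. The key point is that for $x>0$ the factor $e^{-x}$ makes this denominator \emph{close to $1$}, so $p(t)$ is close to $1$ and $1-p(t)$ is close to $0$. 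Quantitatively, I would use Lemma~\ref{lem:YIpointwise} (and a uniform-in-$x$ refinement of it) to show that for large $\lambda^*$ and $\lambda>\lambda^*$, the integral $\lambda e^{\lambda b_\lambda}\int_0^{b_\lambda-a_\lambda x}(1-S(s))e^{-\lambda s}\,ds$ is bounded below by some constant $c>0$ uniformly over $x$ in the relevant range, whence
$$
1-p(b_\lambda - a_\lambda x) = \frac{e^{-x}\,\lambda e^{\lambda b_\lambda}\int_0^{b_\lambda-a_\lambda x}(1-S(s))e^{-\lambda s}\,ds}{1 + e^{-x}\,\lambda e^{\lambda b_\lambda}\int_0^{b_\lambda-a_\lambda x}(1-S(s))e^{-\lambda s}\,ds}\le C e^{-x}
$$
for some constant $C$ independent of $\lambda$. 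Then $(1-p(t))^{k-1}\le C^{k-1}e^{-(k-1)x}$, giving the uniform domination
$$
mx^{m-1}(1-S_{YI,k}(b_\lambda - a_\lambda x))\le m C^{k-1} x^{m-1} e^{-(k-1)x}.
$$

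Once this uniform bound holds, the conclusion is immediate: the right-hand side is integrable on $[K,\infty)$ when $k\ge 2$, and its integral tends to $0$ as $K\to\infty$, independently of $\lambda$, so $\sup_{\lambda>\lambda^*}$ of \eqref{eqn:YIlimitingproofint2} vanishes as $K\to\infty$. This mirrors the structure of Lemma~\ref{lem:YIlimitingproofint1}. The main obstacle I anticipate is the $k=1$ case, where the exponent $k-1=0$ and the bound $(1-p(t))^{k-1}=1$ carries no decay at all. For $k=1$ one must work harder: I would instead bound $1-S_{YI,1}(t) = S(t)(1-p(t)) + (1-S(t))$ and extract decay from the $1-S(t)$ term directly, using the short-time asymptotics $1-S(t)\sim At^p$ or $At^p e^{-C/t}$ evaluated at $t = b_\lambda - x/\lambda$. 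The delicate part is controlling $1-S(b_\lambda - a_\lambda x)$ uniformly in $\lambda$ as $x$ ranges up to $\lambda b_\lambda$ (beyond which $t<0$ and the probability is simply zero, truncating the integral); in the exponential case the factor $e^{-C/t}$ becomes extremely small as $t$ decreases, which should supply more than enough decay, while the power-law case requires checking that $(b_\lambda - x/\lambda)^p$ times the exponential weight remains summable. I expect verifying this uniformity — reconciling the $\lambda$-dependent upper limit of integration with a $\lambda$-independent dominating function — to be the technical crux of the argument.
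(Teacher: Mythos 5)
Your strategy for the core estimate matches the paper's: truncate the integral at $x=b_\lambda/a_\lambda=\lambda b_\lambda$ (where $b_\lambda-a_\lambda x$ hits $0$ and the integrand vanishes), and extract the decay in $x$ from the factor $e^{-x}$ produced by writing $e^{\lambda(b_\lambda-a_\lambda x)}=e^{\lambda b_\lambda}e^{-x}$. One directional slip needs fixing: to conclude $1-p(b_\lambda-a_\lambda x)\le Ce^{-x}$ from your displayed formula you need the factor $J_\lambda(x):=\lambda e^{\lambda b_\lambda}\int_0^{b_\lambda-a_\lambda x}(1-S(s))e^{-\lambda s}\,ds$ to be bounded \emph{above}, not below --- since $1-p=e^{-x}J/(1+e^{-x}J)$ is increasing in $J$, a lower bound on $J_\lambda$ gives only a lower bound on $1-p$. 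The upper bound is immediate and requires no uniform-in-$x$ refinement of Lemma~\ref{lem:YIpointwise}: for $x>0$ the integral is monotone in its upper limit, so $J_\lambda(x)\le J_\lambda(0)$, and Lemma~\ref{lem:YIpointwise} at $x=0$ gives $J_\lambda(0)\le1+\varepsilon$ for all $\lambda>\lambda^*$. This is exactly the paper's step, yielding $1-p(b_\lambda-a_\lambda x)\le(1+\varepsilon)e^{-x}$. (Also, a small confusion early on: on this left-tail scale $p(t)\to1$, not $0$; you self-correct within the same sentence.)

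The more substantive issue is the $k=1$ case, which you flag but do not resolve. Your bound $1-S_{YI,k}(t)\le(1-p(t))^{k-1}$ is a genuinely different and slightly cleaner route for $k\ge2$ --- it disposes of the $1-S$ term entirely, whereas the paper reduces every $k$ to $k=1$ via $S_{YI,k}\ge S_{YI,1}$ and then splits $1-S_{YI,1}\le[1-p]+[1-S]$. But for $k=1$ your exponent vanishes and you are forced back to that same two-term decomposition, and the required uniform estimate $\lim_{K\to\infty}\sup_{\lambda>\lambda^*}\int_K^{\lambda b_\lambda}mx^{m-1}[1-S(b_\lambda-a_\lambda x)]\,dx=0$ is precisely what you label ``the technical crux'' and leave unproven. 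That is a real gap: it is the only place where the short-time asymptotics of $S$ and the specific form of $b_\lambda$ must be used, and it does require an argument (the paper closes it by citing equation A.10 of \cite{tung2025first}). So your proposal reproduces the paper's treatment of the $1-p$ piece, improves on it for $k\ge2$, but proves the lemma only for $k\ge2$.
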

\begin{proof}
    We first note that since $1-S_{YI, k}(b_{\lambda}-a_{\lambda} x) = 0$ when $x\leq b_\lambda/a_\lambda$, so it suffices to show
    $$
    \lim_{K\rightarrow \infty} \sup_{\lambda > \lambda^*} \int_K^{b_\lambda/a_\lambda} mx^{m-1} (1-S_{YI, k}(b_{\lambda}-a_{\lambda} x)) dx =0.
    $$
    Since the first searcher arrives faster than the $k$th searcher for $k\geq 2$, it follows that $S_{YI, k}(t) \geq S_{YI, 1}(t)$ and therefore
    \begin{align*}
        1-S_{YI, k}(b_{\lambda}-a_{\lambda} x) dx &\leq 1-S_{YI, 1}(b_{\lambda}-a_{\lambda} x) \\
        &=1-S(b_{\lambda}-a_{\lambda} x)p(b_{\lambda}-a_{\lambda} x) \\
        &=1-p(b_{\lambda}-a_{\lambda}x)+p(b_{\lambda}-a_{\lambda} x)(1-S(b_{\lambda}-a_{\lambda} x)) \\
        &\leq [1-p(b_{\lambda}-a_{\lambda}x)]+[1-S(b_{\lambda}-a_{\lambda} x)]
    \end{align*}
    As such, it suffices to show
    $$
    \lim_{K\rightarrow \infty} \sup_{\lambda > \lambda^*} \int_K^{b_\lambda/a_\lambda} mx^{m-1} [1-p(b_{\lambda}-a_{\lambda}x)] dx =0
    $$
    $$
    \lim_{K\rightarrow \infty} \sup_{\lambda > \lambda^*} \int_K^{b_\lambda/a_\lambda} mx^{m-1} [1-S(b_{\lambda}-a_{\lambda} x)] dx =0
    $$
    The second equation holds by previous work (see equation A.10 in \cite{tung2025first}). As such, we only need to examine the first equation. Note that
    \begin{align*}
        1-p(b_{\lambda}-a_{\lambda}x) dx &= 1-\frac{1}{1 + \lambda e^{\lambda (b_{\lambda}-a_{\lambda}x)}\int_0^{b_{\lambda}-a_{\lambda}x} (1-S(s))e^{-\lambda s} ds}\\
        &= \frac{1}{1 + \frac{1}{\lambda e^{\lambda (b_{\lambda}-a_{\lambda}x)}\int_0^{b_{\lambda}-a_{\lambda}x} (1-S(s))e^{-\lambda s} ds}}\\
        &= \frac{1}{1 + e^x\frac{1}{\lambda e^{\lambda b_{\lambda}}\int_0^{b_{\lambda}-a_{\lambda}x} (1-S(s))e^{-\lambda s} ds}}\\
        &\leq \frac{1}{1 + e^x\frac{1}{\lambda e^{\lambda b_{\lambda}}\int_0^{b_{\lambda}} (1-S(s))e^{-\lambda s} ds}}\\
        &\leq \frac{1}{1 + e^x\frac{1}{1+\varepsilon}}\\
        &\leq (1+\varepsilon)e^{-x},
    \end{align*}
    which implies
    $$
    \lim_{K\rightarrow \infty} \sup_{\lambda > \lambda^*} \int_K^{b_\lambda/a_\lambda} mx^{m-1} [1-p(b_{\lambda}-a_{\lambda}x)] dx \leq \lim_{K\rightarrow \infty} \sup_{\lambda > \lambda^*} \int_K^{b_\lambda/a_\lambda} mx^{m-1} (1+\varepsilon)e^{-x} dx =0
    $$
\end{proof}
By lemmas \ref{lem:YIlimitingproofint1} and \ref{lem:YIlimitingproofint2}, we have uniform integrability and therefore convergence of moments to the moments of the limiting distribution.

\subsection*{Data availability statement}

The data that supports the findings of this study are available from the corresponding author upon reasonable request.


\bibliography{library.bib}
\bibliographystyle{unsrt}

\end{document}